\theoremstyle{thm}
\newtheorem{thm}{Theorem}[section]
\theoremstyle{lem}
\newtheorem{lem}[thm]{Lemma}
\newtheorem{prop}[thm]{Proposition}
\newtheorem{defn}[thm]{Definition}
\theoremstyle{rem}
\newtheorem{rem}[thm]{Remark}
\newtheorem{exe}[thm]{Example}
\newcommand{\I}{\mathcal{I}}
\newcommand{\C}{\mathcal{C}}
\newcommand{\D}{\mathcal{D}}
\newcommand{\G}{\mathcal{G}}
\newcommand{\J}{\mathcal{J}}
\newcommand{\Ll}{\mathcal{L}}
\newcommand{\R}{\mathcal{R}}
\newcommand{\B}{\mathcal{B}}
\newcommand{\Z}{\mathbb{Z}}
\newcommand{\af}{\alpha}
\newcommand{\bt}{\beta}
\newcommand{\f}{\textbf{f}}
\title{\textbf{Partial Groupoid Actions on $R$-Categories: Globalization and the smash product}}
\author{V\'{\i}ctor Mar\'{\i}n $^{\text{1}}$\\
   \small $^{\text{1}}$Departamento de Matem\'{a}ticas y Estad\'{i}stica \\
   \small Universidad del Tolima\\
   \small Santa Helena, Ibagu\'{e}, Colombia\\
   \small e-mail:vemarinc@ut.edu.co\\\\
   H\'{e}ctor Pinedo $^{\text{2}}$\\
   \small $^{\text{2}}$ Escuela de Matem\'{a}ticas\\
   \small Universidad Industrial de Santander\\
   \small Cra. 27 calle 9, Bucaramanga, Colombia\\
   \small  e-mail:hpinedot@uis.edu.co}
\date{\today}
\begin{document}
\maketitle
\begin{abstract}
\noindent
In this article, we introduce the concept of  partial groupoid  actions  on $R$-semicategories as well as we give criteria for existence of a globalization of it. This point of view is a generalization of the notions of partial  groupoid actions on  rings and partial group action  on an  $R$-semicategory. We also define the notions of partial skew groupoid 
category, smash product and  describe functorial relations
between them, in particular we show that the smash product is a Galois covering of its associated skew groupoid 
category.
\end{abstract}

\noindent
\textbf{2010 AMS Subject Classification:} Primary 20L05, 18E05. Secondary 16W55, 20N02.\\
\noindent
\textbf{Key Words:} Partial groupoid action,  $R$-semicategory,  globalization, smash product.

\section{Introduction} An algebraic  study of  group actions  on categories  was presented  in \cite{Go}, while
for a commutative ring $R,$ the notion of a partial group  action on a   $R$-semicategory was introduced and studied  in \cite{CFM}, for the readers convenience we recall that the  concept of a semicategory  or non-unital category is like that of category but omitting the requirement of identity-morphisms. By an $R$-semicategory we mean a semicategory $\C$ such that the morphism set $_yC_x$ from
an object $x$ to an object $y$ is an $R$-module and the composition  $ {_z\C_y}\times {_y\C_x}\ni (f,g)\mapsto f g\in  {_z\C_x}$ is $R$-bilinear, for each $x,y,z \in \C_0.$ In topology, an example of  semicategory can be formed from the category of metric spaces and short maps, by taking  the nonempty spaces and strictly contractive functions.

  The definition of  semifunctor between two semicategories is similar to the
definition of a functor between categories, where we only  drop  assumptions related to
the unit morphism.
\smallskip

 On the other hand \textit{Groupoids} are usually presented as small categories whose morphisms are
invertible. They are natural extension of groups,   
  we let ${\rm mor}({\G})$   to be the set of morphisms of $\G.$   For a groupoid $\G$ and $g\in {\rm mor}({\G})$, the morphisms
$d(g):=g^{-1}g$ and $r(g):=gg^{-1}$ are called the {\em domain
identity} and {\em range identity} of $g$, respectively. An element
$e\in {\rm mor}({\G})$ is  an {\em identity} of $\G$ if $e=d(g)=r(g^{-1}),$
for some $g\in {\rm mor}({\G})$. The set of identities of $\G$ is  denoted by
$\G_0$.  
 Recall that given 
$g,h\in {\rm mor}({\G})$, the element $gh$ exists if and only if $d(g)=r(h)$. In
this case, $d(gh)=d(h)$ and $r(gh)=r(g)$. We denote by $\G^2$  the
subset of pairs $(g,h)\in \G\times \G$ such that $gh$ exists, and for $e\in \G_0$ we let $\G(-, e)$ to be the set of $g\in {\rm mor}({\G})$  such that $r(g)=e,$  analogously  one defines $\G(e,-).$ The set $\G_e:=\G(-, e)\cap \G(e,-)$  is the so called  {\em
principal group associated to e}. 
  For more details about groupoids, the interested reader may consult  e.~g. \cite{L}. 

Partial actions of groupoids have been a subject of increasingly study and have been considered in several branches,  for instance in \cite{G} the author construct a Birget-Rhodes expansion $\G^{\rm{BR}}$ associated to a ordered groupoid $\G$ and shows that  it  classifies partial actions of $\G$ on sets, in the topological context  they were treated in \cite{ME, ME1, NY}, where the globalization problem was considered.  On the other hand,  ring theoretic results of  global and partial actions of groupoids on algebras are obtained in 
 \cite{B, BFP, BP, GY, NYOP, PF}, in \cite{BPP} the authors study  the existence of connections between partial groupoid actions and partial group actions,  while  Galois theoretic  results for groupoid actions were obtained in \cite{CT, PT1, PT}.   

In this work  we introduce the concept of a partial groupoid action on an $R$-semicategory,  obtaining  a common  ge\-ne\-ra\-li\-za\-tion  of \cite[Definition 3.2]{CFM}   of partial group actions on $R$-semicategories  and  the concept of partial groupoid action on $R$-algebras \cite{BP}, which can be considered as $R$-semicategories with a single  object.

This work is divided as follows. After the introduction, 
in section 2 we present some  notions  and facts which we will use throughout the work. 
In section 3, we introduce the definition of  globalization for a partial action of groupoid on semicategories and give in Theorem \ref{teogloba} necessary and sufficient conditions for the existence of such a globalization, generalizing  similar results of \cite{CFM} and \cite{BP} (see Remark \ref{cfdp}).
 In section 4,  we associate to a partial action $\alpha$ of  $\G$ on a $R$-semicategory $\C$ a non-necessarilly associative $R$-semicategory $\C* _\alpha \G$, which we call the partial skew groupoid semicategory, and is analogous to the skew group semicategory defined in \cite{CFM},  a condition for the associativity of morphisms in $\C* _\alpha \G$ is presented in  Proposition \ref{assoc} and a Morita context between algebras associated to $\C* _\alpha \G$ and the skew groupoid semicategory induced by the globalization of $\alpha$ is given in Theorem \ref{morequiv}. Finally, in section 5 we define the quotient semicategory $\C/\G$, and show that it makes sense when $\C$ is a free category,   as in the case of group actions, one says that  $\C$ is a Galois
covering of the quotient $\C/\G.$ 
Furthermore,  we define smash product semicategory $\B\#\G$ and in  Lemma \ref{cat} we give necessary conditions for it to be a category ,  the principal result of this section is  Theorem \ref{Galois} which shows that there exist a global action $\alpha$ on $\B\#\G$ such that $\B\#\G$ is free $\G$-category and a Galois covering of the $\G$-graded  semicategory $\B\otimes \G$, with objects $(\B\otimes \G)_0=\B_0\times \G_0$ and  morphism $_{(y,f)}(\B\otimes \G)_{(x,e)}=\bigoplus_{g\in _f\mathcal{G}_e}{_y\mathcal{B}_x}^g.$
\medskip

Throughout the work $R$ will denote commutative ring with identity element, and will work only with small $R$-semicategories, that is $R$-semicategories $\C$ in which their class of objects $\C_0$ is a set. 
\section{Partial actions on $R$-semicategories}
We next establish our basic  definitions and results.

Partial actions of categories on sets and topological spaces were defined in \cite[Definition 7]{NY}, while  partial actions of groupoids on rings were introduced in \cite[p. 3660]{BP}.  For the reader's convenience we  recall the definition of a partial action of a groupoid on a set and a ring.

Following \cite{L},  a partial function set $\phi\colon X\to Y$ is a map $\phi: A\to B$, where $A$ and $B$ are subsets of $X$ and $Y$ respectively. Now we recall from \cite{NY} the next.

\begin{defn} \label{NYG}Let $\G$ be a groupoid and $X$ a set. A partial action of $\G$ on $X$ is a partial function ${\rm mor}(\G)\times X\to X$ denoted by $(g,x)\to g\cdot x,$ if $g\cdot x,$ is defined 
such that
\begin{enumerate}
\item[(PA1)] If $ g \cdot x$ is defined, then  $g^{-1}\cdot (g\cdot x)$ is defined, and $g^{-1}\cdot (g\cdot x)=x$.
\item[(PA2)] If $ g\cdot (h\cdot x)$ is defined, then $(gh)\cdot x$  is defined, and $g\cdot (h\cdot x)=(gh)\cdot x,$ for all $(g,h)\in \G^{2}.$
\item[(PA3)] For every $x \in X,$ there is $e \in \G_0$ such that $e \cdot x$ is defined. If $f \in \G_0$ and  $x \in X,$ are
chosen so that $f \cdot x$ is defined, then $f \cdot x=x$.
\end{enumerate}

\end{defn}

By \cite[Remark 28]{NY} partial groupoid actions on sets can be equivalently formulated in terms of partial defined maps  as follows.

\begin{defn}\label{ny}
A  \emph{partial action} $\alpha$ of a groupoid $\mathcal{G}$ on set $X$ is a pair $\alpha=(D_g, \alpha_g)_{g\in {\rm mor}(\G)}$ where for each $g\in {\rm mor}(\G), D_g\subseteq  D_{r(g)}\subseteq X$  and $\alpha_g:D_{g^{-1}}\rightarrow D_g$ are bijections such that:
\begin{enumerate}
\item[(i)] $X=\bigcup_{e\in \G_0}D_e$ and  $\alpha_e$ is the identity map ${\rm id}_{D_e}$ of $D_e,$ for all $e\in \mathcal{G}_0$;
\item[(ii)] $\alpha_{g}(D_{g^{-1}}\cap D_{h})=D_g\cap  D_{gh}$;
\item[(iii)] $\alpha_g(\alpha_h(x))=\alpha_{gh}(x)$, for every $x\in \alpha_h^{-1}(D_{g^{-1}}\cap D_h)$,
\end{enumerate}
 for each $(g,h)\in \mathcal{G}^2$.
\end{defn}

\begin{exe}
Consider $X=\{e_1,e_2,e_3\}$ and let $\mathcal{G}=\{d(g),r(g),g,g^{-1}\}$ be a groupoid. Let us take the subsets $D_{d(g)}=\{e_1, e_2\}, D_{r(g)}=D_g=\{e_3\},$ and $ D_{g^{-1}}=\{e_1\}$ of $X$ and define $\alpha$ by $\alpha_{d(g)}={\rm id}_{D_{d(g)}}, \alpha_{r(g)}={\rm id}_{D_{r(g)}}, \alpha_g(e_1)=e_3, \,\alpha_{g^{-1}}(e_3)=e_1$. It is easy to see that $\alpha$ is a partial action of $\mathcal{G}$ on $X$.
\end{exe}

\begin{rem} In \cite[Definition 2.4]{GY} the authors also present  the notion of a partial action of  groupoid $\G$ on a set $X$, the only difference  with Definition \ref{ny} is that the condition $X=\bigcup_{e\in \G_0}D_e$ is not required. We preffer Definition \ref{ny} because  by adding this  requirement we get the  advantage
that  partial groupoid actions,
in the case when $\G$ is a group,  coincides with the classical definition of partial group actions on sets (see \cite[Definition 1.2]{EX}).
\end{rem}
The concept of partial action of a groupoid on a ring is similar to Definition \ref{ny}. Indeed, we have the following.

\begin{defn}\label{BP} A  \emph{partial action} $\alpha$ of a groupoid $\mathcal{G}$ on a ring $A$ is a pair $\alpha=(D_g, \alpha_g )_{g\in {\rm mor}(\G)}$ where for each $g\in {\rm mor}(\G),$ one has that  $D_{r(g)}$  is an ideal of $A,$ $D_g$ is an ideal of  $D_{r(g)},$   and $\alpha_g:D_{g^{-1}}\rightarrow D_g$ are ring isomorphisms such that:

\begin{enumerate}
\item[(i)]  $\alpha_e$ is the identity map ${\rm id}_{D_e}$ of $D_e,$ for all $e\in \mathcal{G}_0$;
\item[(ii)] $\alpha_h^{-1}(D_{g^{-1}}\cap D_h)\subseteq   D_{(gh)^{-1}}$;
\item[(iii)] $\alpha_g(\alpha_h(x))=\alpha_{gh}(x)$, for every $x\in \alpha_h^{-1}(D_{g^{-1}}\cap D_h)$,
\end{enumerate}
 afor each $(g,h)\in \mathcal{G}^2$.
\end{defn}

\begin{defn}
Let $\mathcal{G}$ be a groupoid and $X$ a set or a ring. A partial action $\af$ of $\mathcal{G}$  on $X$ is  global if $\alpha_g\circ \alpha_h=\alpha_{gh},$ for all $(g,h)\in \mathcal{G}^2$.
\end{defn}

 For a partial action of $\G$ on an object $X$ and   $x\in X$ we denote $\G^x=\{g\in {\rm mor}( \mathcal{G})\mid x\in D_{g^{-1}}\}$    and $\G \cdot x=\{gx\mid g\in\G^x\},$ the $\G$-orbit of $x.$ Notice that by (PA3) the set $\G_0\cap \G^x$ is always non-empty.
\smallskip

 Every groupoid acts globally on itself by multiplication. Indeed, we have the following.
\begin{exe}\label{exe3} Let $\G$ be a groupoid and  $g\in {\rm mor}(\G),$ set $\G_g=\G(-, r(g))$ and 
$$\beta_g\colon\G(-, d(g))\ni h\mapsto gh \in\G(-, r(g)).$$
 It is not difficult to show that the family $\beta=(\beta_g, \G_g)_{g\in {\rm mor}(\G)}$ is a global action of $\G$ on itself. Moreover, for any $s\in {\rm mor}(\G)$ one has that $\G\cdot s=\G(d(s),-).$ Indeed it is clear that $\G\cdot s\subseteq \G(d(s),-),$ conversely if $u\in \G(d(s),-),$ then $(u,s^{-1})\in \G^2$ and   $u=(us^{-1})s\in \G\cdot s,$ as desired.
\end{exe}
 Partial action of groups on  $R$-semicategories were introduced in \cite{CFM}. Now we extend this notion to the concept of partial groupoid actions, but first we recall the following. 
\begin{defn}\label{idmor}\cite [Definition 2.2. and Definition 2.5] {CFM} Let $\C$ be an 
 $R$-semicategory, and $\I$   a collection  of morphisms in $\C$. Then
\begin{itemize} 
\item We say that $\I$ is an ideal of $\C$ if  for $f\in \I,$ and $g,h$ morphisms in $\C,$ one has that  $gf$ and $fh$ are in $\I$ whenever $gf$ and $fh$ are defined, and   ${_y\I_x}$ is an $R$-submodule of  ${_y\C_x}$, where ${_y\I_x}={_y\C_x}\cap \I.$ 
\item  A morphism e in ${_x\I_x}$ is called a left (respectively right) local identity if, $ef=f$ for all $f\in {_x\I_y}$, and (respectively $fe=f$ for all $f\in {_y\I_x}$). A local identity is a left and right local identity.
\end{itemize}
\end{defn}

We write $I\unlhd \C$ to denote that $\I$ is an ideal of $\C.$ 

\begin{defn}\label{apgc}
Let $\mathcal{G}$ be a groupoid, $\C$ an  $R$-semicategory. We say that $\alpha=(\mathcal{I}^g,  \alpha^g)_{g\in {\rm mor}(\mathcal{G})}$ is a partial action of $\mathcal{G}$ on $\C$ if the following conditions hold:
\begin{enumerate}
\item[(i)] $\mathcal{G}$ acts partially on the set objects $\C_0$ of $\C.$ This partial action will be denoted by $\alpha_0=( \C_0^g , \alpha_0^g )_{g\in {\rm mor}{\G}}$;
\item[(ii)] For each $g\in \mathcal{G}$ there exists a subset $\mathcal{I}^g$ of morphisms in $\C$ such that ${_a\mathcal{I}^g_b}=0$ if $\{a, b\}$ is not a subset of $\C_0^g$;
\item[(iii)] There are equivalence of $R$-semicategories $\alpha^g:\mathcal{I}^{g^{-1}}\rightarrow \mathcal{I}^g$, where $\mathcal{I}^g\unlhd\mathcal{I}^{r(g)}\unlhd \C, $ for each $g\in \mathcal{G}$, such that for $f\in \,_y\mathcal{I}^{g^{-1}}_x$ and $\{x,y\}\subseteq \C_0^{g^{-1}}$, one gets $\alpha^g(f)\in {_{gy}{\mathcal{I}^g}_{gx}};$
\item[(iv)] $\alpha^e$ is the identity map  of $\mathcal{I}^e$;
\item[(v)] For  objects $x,y\in \C_0^h\cap \C_0^{g^{-1}}$,
           $\alpha^{h^{-1}}({_y{\mathcal{I}^h}_x} \cap {_y{\mathcal{I}^{g^{-1}}_x}})\subseteq {_{h^{-1}y}\mathcal{I}^{(gh)^{-1}}_{h^{-1}x}},$
\item[(vi)] If $x,y\in \C_0^h\cap \C_0^{g^{-1}}$ and $f\in \alpha^{h^{-1}}({_y{\mathcal{I}^h}_x} \cap {_y\mathcal{I}^{g^{-1}}_x})$, then $\alpha^g(\alpha^h(f))=\alpha^{gh}(f)$,
\end{enumerate}
for all $e\in \mathcal{G}_0$ and $(g,h)\in \mathcal{G}^2$.
\end{defn}

\begin{rem} $\alpha=( \mathcal{I}^g,  \alpha^g )_{g\in {\rm mor}(\mathcal{G})}$ is a partial action of $\mathcal{G}$ on $\C.$ Then:
\begin{itemize}
\item  The family of ideals $\{\mathcal{I}^g\}_{g\in {\rm mor}(\mathcal{G})}$ satisfy  $\I^g_0=\C_0,$ for each morphism $g$ of $\G.$
\item If we require that   $\alpha_0$ is global, then the pair  $\alpha^{(e)}=(\mathcal{I}^g, \alpha^g)_{g\in \G_e}$ is a partial action (in the sense of \cite[Definition 3.2]{CFM}) of  $\G_e$ on the $R$-semicategory, $\mathcal{I}^e$, for all $e\in \mathcal{G}_0.$  
\end{itemize}
\end{rem}
\begin{defn}
Let $\alpha=( \mathcal{I}^g,  \alpha^g)_{g\in \mathcal{G}}$ be a partial action of a groupoid $\mathcal{G}$ on $\C$. We say that $\alpha$ is global if $\alpha^g\alpha^h=\alpha^{gh}\,\,{\rm and}\,\, \alpha_0^g\alpha_0^h=\alpha_0^{gh}\, \text{for all}\, (g,h)\in \mathcal{G}^2.$
\end{defn}
We have the following.

\begin{lem}\label{global}
Let $\alpha=( \mathcal{I}^g,  \alpha^g)_{g\in \mathcal{G}}$ be a partial action of a groupoid $\mathcal{G}$ on an $R$-semicategory $\C$. Then, the following statements hold:
\begin{enumerate}
\item[(i)] $\alpha$ is global if and only if $\I^g=\I^{r(g)}$  and $\C_0^g =\C_0^{r(g)} $  for each $g\in {\rm mor}(\G),$
\item[(ii)] $\alpha^{-1}_g=\alpha_{g^{-1}},$    for each $g\in {\rm mor}(\G)$;
\item[(iii)] $\alpha^g({_y\mathcal{I}^{g^{-1}}_x}\cap {_y\mathcal{I}^h_x})={_y\mathcal{I}^g_x}\cap {_y\mathcal{I}^{gh}_x}, $ for any  $(g,h)\in \mathcal{G}^{2}$.
\end{enumerate}
\end{lem}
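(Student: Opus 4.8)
The plan is to establish the three assertions in the order (ii), (iii), (i), since each relies on the previous ones. Throughout I would argue at the level of ideals, writing $\mathcal{I}^g\cap\mathcal{I}^h$ for the componentwise intersection, and only afterwards track the translation of object subscripts: recall from Definition \ref{apgc}(iii) that $\alpha^g$ sends $_y\mathcal{I}^{g^{-1}}_x$ into $_{gy}\mathcal{I}^g_{gx}$, so every homset identity carries an implicit shift of source and target by $g$. For (ii) I would specialize the cocycle relation (vi) to the pair $(g,g^{-1})\in\mathcal{G}^2$. Since $gg^{-1}=r(g)$ and $\alpha^{r(g)}=\mathrm{id}_{\mathcal{I}^{r(g)}}$ by (iv), condition (vi) gives $\alpha^g(\alpha^{g^{-1}}(f))=f$ for every $f$ in the image of $\alpha^g$. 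Because $\alpha^g\colon\mathcal{I}^{g^{-1}}\to\mathcal{I}^g$ is an equivalence and the object map $\alpha_0^g\colon\C_0^{g^{-1}}\to\C_0^g$ is a bijection, that image is all of $\mathcal{I}^g$, so $\alpha^g\circ\alpha^{g^{-1}}=\mathrm{id}_{\mathcal{I}^g}$; interchanging $g$ and $g^{-1}$ gives the other composite, whence $(\alpha^g)^{-1}=\alpha^{g^{-1}}$.

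For (iii), the inclusion $\subseteq$ comes from relabeling (v): applying (v) to the pair $(h^{-1},g^{-1})$, which lies in $\mathcal{G}^2$ precisely when $(g,h)$ does, yields $\alpha^g(\mathcal{I}^{g^{-1}}\cap\mathcal{I}^h)\subseteq\mathcal{I}^{gh}$, while $\alpha^g(\mathcal{I}^{g^{-1}}\cap\mathcal{I}^h)\subseteq\alpha^g(\mathcal{I}^{g^{-1}})=\mathcal{I}^g$ is automatic. For $\supseteq$ I would apply this same inclusion to the pair $(g^{-1},gh)\in\mathcal{G}^2$, obtaining $\alpha^{g^{-1}}(\mathcal{I}^g\cap\mathcal{I}^{gh})\subseteq\mathcal{I}^{g^{-1}}\cap\mathcal{I}^h$ (using $g^{-1}(gh)=h$), then push forward by $\alpha^g$ and cancel $\alpha^g\alpha^{g^{-1}}=\mathrm{id}$ from (ii).

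For (i), I would read the global condition ``$\alpha^g\alpha^h=\alpha^{gh}$'' as an equality of partially defined maps, so that it forces equality of domains. The domain of $\alpha^g\alpha^h$ is $\alpha^{h^{-1}}(\mathcal{I}^h\cap\mathcal{I}^{g^{-1}})$, which by (iii) equals $\mathcal{I}^{h^{-1}}\cap\mathcal{I}^{(gh)^{-1}}$, whereas the domain of $\alpha^{gh}$ is $\mathcal{I}^{(gh)^{-1}}$. Thus global forces $\mathcal{I}^{(gh)^{-1}}\subseteq\mathcal{I}^{h^{-1}}$ for all composable $(g,h)$; specializing to $(g,h)=(s,s^{-1})$ and using $(ss^{-1})^{-1}=r(s)$ gives $\mathcal{I}^{r(s)}\subseteq\mathcal{I}^s$, and the reverse inclusion is built into $\mathcal{I}^s\unlhd\mathcal{I}^{r(s)}$. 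Conversely, if $\mathcal{I}^g=\mathcal{I}^{r(g)}$ for all $g$, the same domain computation collapses $\mathcal{I}^{h^{-1}}\cap\mathcal{I}^{(gh)^{-1}}$ to $\mathcal{I}^{(gh)^{-1}}$, so the two maps share a domain and agree on it by (vi); the object statements $\C_0^g=\C_0^{r(g)}$ are handled identically at the level of the underlying set action $\alpha_0$, whose axiom Definition \ref{ny}(ii) plays the role of part (iii).

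The main obstacle I anticipate is not a single computation but the careful bookkeeping of the domains of composite partially defined maps together with the groupoid constraints $(g,h)\in\mathcal{G}^2$. The crux of (i) is recognizing that the equation $\alpha^g\alpha^h=\alpha^{gh}$ silently asserts the domain identity $\mathcal{I}^{h^{-1}}\cap\mathcal{I}^{(gh)^{-1}}=\mathcal{I}^{(gh)^{-1}}$, and that the specialization $(g,h)=(s,s^{-1})$ extracts exactly $\mathcal{I}^{r(s)}=\mathcal{I}^s$; keeping the object-index translations consistent with Definition \ref{apgc}(iii) is the other place where care is required.
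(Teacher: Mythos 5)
Your proposal is correct and takes essentially the same route as the paper: the paper's own proof of Lemma \ref{global} is just the one-line deferral ``Similar to \cite[Lemma 1.1]{BP}'', and your argument --- deriving (ii) from axioms (iv) and (vi) of Definition \ref{apgc} applied to the pair $(g,g^{-1})$, getting (iii) from axiom (v) applied to $(h^{-1},g^{-1})$ together with (ii), and settling (i) by comparing domains of the partially defined composites and specializing to $(s,s^{-1})$ --- is exactly that Bagio--Paques argument transplanted to semicategories, with the object-index shifts tracked correctly. The only soft spot, reading ``equivalence of $R$-semicategories'' as bijective on hom-sets so that $\alpha^g(\mathcal{I}^{g^{-1}})=\mathcal{I}^g$, is an imprecision of the paper's Definition \ref{apgc} (whose ring-theoretic model in \cite{BP} uses isomorphisms) rather than a gap in your proof.
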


\begin{proof}
Similar to \cite[Lemma 1.1]{BP}.
\end{proof}

\begin{exe}\label{exe1}
We consider 
the  R-semicategory $\C$ with
\begin{enumerate}
\item $\C_0=\{x, y\}.$
\item Given $ u,v \in \C_0$ let ${_u\C_v}=Re_1\oplus Re_2\oplus Re_3,$  where $e_1,e_2,e_3$ are pairwise orthogonal central idempotents with sum 1.
\item For all $u,v,w \in \C_0$ an $R$-bilinear map $\cdot: {_u\C_v}\times {_v\C_w}\to {_u\C_w};$ given by multiplication.
\end{enumerate}
Take the groupoid $\G=\{d(g),r(g),g,g^{-1}\}$;   then 
 $\G$ acts partially on $\C_0$ via $\af_0,$ where: 
$$\C_0^{r(g)}=\C_0^{d(g)}=\C_0,\,\,\, \C_0^g=\{x\}, \,\,\,\text{and}\,\,\, \C_0^{g^{-1}}=\{y\}$$ 
and $\alpha_0^g:\C_0^{g^{-1}}\to \C_0^g; \, y \mapsto x, \, \alpha_0^{g^{-1}}:\C_0^g\to \C_0^{g^{-1}};\, x\mapsto y,$  $\alpha_0^{r(g)}={\rm id}_{\C_0^{r(g)}}$ and $\alpha_0^{d(g)}={\rm id}_{\C_0^{d(g)}}$.\\
Now consider the ideals of $\C$  given by 
\begin{itemize}
\item $_u{\mathcal I}^g_v=Re_3,$ if $(u,v)=(x,x)$ and  $_u{\mathcal I}^g_v=0,$ if $(u,v)\neq (x,x). $
\item $_u{\mathcal I}^g_v=Re_3,$ for all $u,v\in \C_0.$
\item $_u\mathcal{I}^{g^{-1}}_u=Re_1$ if $(u,v)=(y,y)$ and $_u\mathcal{I}^{g^{-1}}_u=0,$ if $(u,v)\neq (y,y). $
\item $_u\mathcal{I}^{d(g)}_v=Re_1\oplus Re_2,$ and $_u{\mathcal I}^{r(g)}_v=Re_3,$ for all $u,v\in \C_0.$
\end{itemize}

Then $\alpha=(\mathcal{I}^g, \alpha^g)_{g\in {\rm mor}(G)}$ is a partial action of $\G$ in $\C,$ where   $\alpha^g(ae_1)=ae_3, \, \alpha^{g^{-1}}(ae_3)=ae_1, \, \alpha^{d(g)}=id_{\mathcal{I}^{d(g)}}, \, \alpha^{r(g)}=id_{\mathcal{I}^{r(g)}},$ for each $a\in R$.

\end{exe}
We obtain a partial action of a groupoid by restriction of a (global) groupoid action, in a standard way.

\begin{exe}\label{indu}(\textbf{Induced partial action})
 Let $\C$ be an  R-semicategory and $\beta=( E^g, \beta^g)_{g\in {\rm mor}(\G)}$ a global action  of a groupoid $\mathcal{G}$ on $\C.$ In particular,  there is a global action  $\beta_0=(\C_0^g, \beta_0^g)_{g\in {\rm mor}(\G)}$    on $\C_0.$ Let $\I$ be an ideal of $\C,$  take $e\in \G_0$ and  set $\I_0^e=\I_0\cap \C_0^e$. The partial action $\alpha_0=(\I_0^g, \alpha_0^g)_{g\in {\rm mor}{\G}}$ on $\I_0$ is defined as restriction of $\beta$, that is, $$\I_0^g=\I_0^{r(g)}\cap \beta_g(\I_0^{d(g)})\,\,\text{ and }\,\,\alpha_0^g={\beta_0^g}\restriction  \I_0^{g^{-1}},$$  for all $g\in{\rm mor}(\G)$. 

Now, for $g\in {\rm mor}(\G), x,y\in \G_0$  define  $\mathcal{I}^g$ by:
\begin{itemize}
\item If  $\{x, y\}$ is not a subset of $\I_0^g,$ set $_x\mathcal{I}^g_y=0$ ;
\item If  $\{x, y\}\subseteq \I_0^g,$ then $\{x, y\}\subseteq \I_0^{r(g)}\subseteq  \C_0^{r(g)}=\C_0^{g},$ so $\beta_{g^{-1}}(x)=g^{-1}x$ and $\beta_{g^{-1}}(y)=g^{-1}y$ are well defined. Thus we set 
$${_y\mathcal{I}^g_x}=({_y\I_x}\cap {_yE^g_x})\cap \beta_g({_{g^{-1}y}\I_{g^{-1}x}}\cap {_{g^{-1}y}E^{g^{-1}}_{g^{-1}x}}),$$

In particular, ${_y\mathcal{I}^{r(g)}_x}={_y\I_x}\cap {_yE^{r(g)}_x}.$
\end{itemize} Finally set $\alpha^g=\beta^g\mid_{\I^{g^{-1}}}.$ Then  $\alpha=(\mathcal{I}^g, \alpha^g)_{g\in \mathcal{G}}$ is a partial action of $\mathcal{G}$ on $\I.$  Indeed, by construction we  have (i), (ii), (iv) and (vi) in Definition \ref{apgc}. To check (iii) let us show first that $\mathcal{I}^g\unlhd \I^{r(g)}$ and  $\I^{r(g)}\unlhd \I,$ for all $g\in {\rm mor}(\G).$ For this, take  $x,y\in \I_0$  we only consider the case when  $\{x, y\}\subseteq  \I_0^g.$  If $f\in {_y\mathcal{I}^g_x}$ and $l,m$ are morphisms such that $l\in {_z\I^{r(g)}_y}$ and $m\in {_x\I^{r(g)}_u},$ with $u,z \in \I_0$.  We need to show that
$fm\in  {_y\mathcal{I}^g_u}$ and $lf\in  {_z\mathcal{I}^g_x}.$ To prove the first assertion, notice that the fact that $E^g\unlhd E^{r(g)}$ implies  $fm\in {_y\I_u}\cap {_yE^g_u}.$ Moreover,  since $\beta$ is global there are 
$$\widetilde{f}\in {_{g^{-1}y}\I_{g^{-1}x}}\cap {_{g^{-1}y}E^{g^{-1}}_{g^{-1}x}}\,\,\,\text{  and}\,\,\,\, \widetilde{m}\in {_{g^{-1}x}E^{g^{-1}}_{g^{-1}u}}$$ 
such that $f=\beta_g(\widetilde{f}),$ and $m=\beta_g(\widetilde{m})$ respectively, so $\widetilde{f}\widetilde{m}\in  {_{g^{-1}y}\I_{g^{-1}u}}\cap {_{g^{-1}y}E^{g^{-1}}_{g^{-1}u}},$ thus 
$$fm=\beta_g(\widetilde{f})\beta_g(\widetilde{m})=\beta_g(\widetilde{f}\widetilde{m})\in ({_z\I_u}\cap {_zE^g_u})\cap \beta_g({_{g^{-1}z}D_{g^{-1}u}}\cap {_{g^{-1}z}E^{g^{-1}}_{g^{-1}u}})={_z\mathcal{I}^g_u}.$$
 In an analogous way one  shows that $lf\in   {_z\mathcal{I}^g_x},$ and using tha fact that $ E^{r(g)}$ is an ideal of $\C$ it is not difficult to conclude that $ \I^{r(g)}$ is an ideal of $\I.$
Finally, it is easy to check that   $\alpha^g:\mathcal{I}^{g^{-1}}\rightarrow \mathcal{I}^g$ is an equivalence of $R$-semicategories,  for every $g\in {\rm mor}(\G).$
%

To check (v) let $f\in \alpha^{h^{-1}}({_y\mathcal{I}^h_x}\cap {_y\mathcal{I}^{g^{-1}}_x})$,  where $(g,h)\in \mathcal{G}^2.$ Then $\alpha^h(f)\in{_y\mathcal{I}^h_x}\cap {_y\mathcal{I}^{g^{-1}}_x}$ but
\begin{align*}
{_y\mathcal{I}^h_x}\cap {_y\mathcal{I}^{g^{-1}}_x}
&=({_y\I_x}\cap {_yE^h_x})\cap \beta_h({_{h^{-1}y}\I_{h^{-1}x}}\cap{_{h^{-1}y}E^{h^{-1}}_{h^{-1}x}})\cap\\
& \hspace{1,1cm}\cap ({_y\I_x}\cap {_yE^{g^{-1}}_x})\cap \beta_{g^{-1}}({_{gy}\I_{gx}}\cap {_{gy}E^g_{gx}})\\
&=({_y\I_x}\cap {_yE^h_x}\cap {_yE^{g^{-1}}_x})\cap \beta_h({_{h^{-1}y}\I_{h^{-1}x}}\cap{_{h^{-1}y}E^{h^{-1}}_{h^{-1}x}})\cap\\
&\hspace{5cm} \cap \beta_{g^{-1}}({_{gy}\I_{gx}}\cap {_{gy}E^g_{gx}})\\
&\subseteq  \beta_h({_{h^{-1}y}\I_{h^{-1}x}}\cap{_{h^{-1}y}E^{h^{-1}}_{h^{-1}x}})
 \cap \beta_{g^{-1}}({_{gy}\I_{gx}}\cap {_{gy}E^g_{gx}}).
\end{align*}

Note that,
$E^g=E^{r(g)}=E^{r(gh)}=E^{gh},$ and $E^{h^{-1}}=E^{r(h^{-1})}=E^{r((gh)^{-1})}=E^{(gh)^{-1}}.$
Hence,
\begin{align*}
f&\in  \beta_{h^{-1}}(\beta_h({_{h^{-1}y}\I_{h^{-1}x}}\cap{_{h^{-1}y}E^{h^{-1}}_{h^{-1}x}}))\cap \beta_{h^{-1}}(\beta_{g^{-1}}({_{gy}\I_{gx}}\cap {_{gy}E^g_{gx}}))\\
&  \subseteq ({_{h^{-1}y}\I_{h^{-1}x}}\cap {_{h^{-1}y}E^{(gh)^{-1}}_{h^{-1}x}})\cap \beta_{(gh)^{-1}}({_{gy}\I_{gx}}\cap {_{gy}E^{gh}_{gx}}) \\
&={_{h^{-1}y}\mathcal{I}^{(gh)^{-1}}_{h^{-1}x}},
\end{align*}
 as desired. 
\end{exe}

\section{Globalization of partial actions of groupoids}
If $\alpha$ is a partial action of a group $G$ on a  $R$-semicategory $\C,$ then there exists an globalization of $(\D,\beta),$ if and only if, for all $x\in \C_0$ and $g\in G$ the space ${_x\mathcal{I}^{g}_x}$ contains a local identity element (\cite[Theorem 4.6]{CFM}). We extend this result to the frame of partial actions of groupoids. But first,  for the sake of completeness, we recall the definition of globalization for partial actions of groupoids on algebras.
\begin{defn}\label{defgloba} A global action
$\bt=(B_g,\bt_g)_{g\in {\rm mor}( \mathcal{G})}$ of a groupoid $\G$ on a ring $B$ is
a {\it globalization} or enveloping action of a partial action
$\af=(A_g,\af_g)_{g\in{\rm mor}( \mathcal{G})}$ of $\G$ on $\
A$ if, for each $e\in
\G_0,$ there exists a ring monomorphism $\psi_e:A_e\to B_e$ such
that:
\begin{enumerate}\renewcommand{\theenumi}{\roman{enumi}}   \renewcommand{\labelenumi}{(\theenumi)}
\item $\psi_e(A_e)\unlhd B_e$;
\item $\psi_{r(g)}(A_{g})=\psi_{r(g)}(A_{r(g)})\cap\bt_g(\psi_{d(g)}(A_{d(g)}))$;
\item $\bt_g(\psi_{d(g)}(a))=\psi_{r(g)}(\af_g(a))$, for all $a\in A_{g^{-1}}$;
\item $B_g=\sum\limits_{r(h)=r(g)}\bt_h(\psi_{d(h)}(A_{d(h)}))$.
\end{enumerate}
\end{defn}
Then according to \cite[Theorem 2.1]{BP}, if  $A$ is a ring such that $A_e$ is unital, then $\alpha$ admits a globalization, if and only if, $A_g$ is unital for all $g\in {\rm mor}(G).$
 
Now we combine \cite[Definition 4.1]{CFM} and  Definition \ref{defgloba} to  give the notion of globalization of a partial groupoid on an $R$-semicategory,. 


\begin{defn}\label{defn2}
Let $\C$ be an  $R$-semicategory and $\alpha=(\mathcal{I}^g, \alpha^g)_{g\in{\rm mor}( \mathcal{G})}$ a partial action of $\mathcal{G}$ on $\C.$  We say that a pair $(\D,\beta)$, where $\D$ is an $R$-semicategory and $\beta=(\mathcal{J}^g,\beta^g)_{g\in {\rm mor}( \mathcal{G})}$ is a global action of $\mathcal{G}$ on $\D,$  is a globalization of $(\C,\alpha)$ if the following conditions are satisfied:
\begin{enumerate}
\item[(i)] $\beta_0$ is a  universal globalization of $\af_0,$ in the sense of \cite[Definition 11]{NY}.
\item[(ii)] For all $e\in \mathcal{G}_0$ there exists a faithful semifunctor $\varphi_e:\mathcal{I}^e\rightarrow \mathcal{J}^e$  such that $\varphi_e(\mathcal{I}^e)$ is an ideal of $\mathcal{J}^e.$ 
\item[(iii)] $\varphi_{r(g)}({_y\mathcal{I}^g_x})=\varphi_{r(g)}({_y\mathcal{I}^{r(g)}_x})\cap \beta_g(\varphi_{d(g)}({_{g^{-1}y}}\mathcal{I}^{d(g)}_{{g^{-1}x}}))$, for all $\{x,y\}\subseteq \C_0^{g}$ and $g\in {\rm mor}( \mathcal{G});$
\item[(iv)] $\beta_g \circ \varphi_{d(g)}(f)=\varphi_{r(g)}\circ \alpha_g (f)$, for all $f\in {_y\mathcal{I}^{g^{-1}}_x}$ and $g\in {\rm mor}( \mathcal{G});$
\item[(v)] ${_y\mathcal{J}^g_x}=\sum\limits_{r(h)=r(g)}\beta_h(\varphi_{d(h)}({_{h^{-1}y}\mathcal{I}^{d(h)}_{h^{-1}x}}))$, for all $x,y \in \D_0,$ and $g\in {\rm mor}(\G).$
\end{enumerate}
\end{defn}
\begin{rem}\label{obcon} Let
$\beta=(\mathcal{J}^g, \beta^g)_{g\in {\rm mor}(\G)}$ be a globalization for $\alpha,$ then  since $\beta_0$ is a universal globalization of $\alpha_0$ by \cite[Remark 22]{NY} we can assume that $\C_0\subseteq \D_0.$
\end{rem}
\begin{defn}
Given  $R$-semicategories $\D$ and $\D'$ with global actions $\beta=(\mathcal{J}^g,\beta^g)_{g\in {\rm mor}\mathcal{(G)}}$ and $\beta'=(\mathcal{J'}^{g},\beta'^{g})_{g\in {\rm mor}\mathcal{(G)}}.$ Suppose that $\D_0=\D'_0,$ then  we say that $\beta$ and $\beta'$ are equivalent, if  for each $e\in \mathcal{G}_0$, there is an equivalence of categories $\psi_e:\mathcal{J'}^{e}\rightarrow \mathcal{J}^e$ such that $\beta_g \circ \psi_{d(g)}(f)=\psi_{r(g)}\circ \beta'_g(f), $ for all $f\in {_y\mathcal{J'}^{d(g)}_x}, $ and $ x,y \in \D'_0$.\end{defn}

\begin{thm}\label{teogloba}
Let $\alpha=(\mathcal{I}^g, \alpha^g)_{g\in {\rm mor}\mathcal{(G)}}$ be a partial action of a groupoid $\mathcal{G}$ on an  $R$-semicategory $\C$ such that 
$
{_x\mathcal{I}^e_x}$ contains an  identity element, for any $x\in \C_0$ and each $e\in \mathcal{G}_0$. Then, $\alpha$ admits a globalization $\beta$ if and only if each $R$-space ${_x\mathcal{I}^g_x}$  contains a local identity element, for each $g\in {\rm mor}\mathcal{(G)}$, and each $x\in \C_0$. Furthermore, if $\beta$ exists then it is unique up to equivalence.
\end{thm}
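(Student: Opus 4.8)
The plan is to prove the two implications separately and then settle uniqueness, in each case reducing the semicategorical statement to the ring and group prototypes of \cite[Theorem 2.1]{BP} and \cite[Theorem 4.6]{CFM} by working objectwise on the diagonal algebras ${_x\C_x}$.

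For necessity, suppose $(\D,\bt)$ is a globalization and fix $g\in{\rm mor}(\G)$ and $x\in\C_0$. Since $r(g),d(g)\in\G_0$, the hypothesis provides identities $1^{r(g)}_x\in{_x\I^{r(g)}_x}$ and $1^{d(g)}_{g^{-1}x}\in{_{g^{-1}x}\I^{d(g)}_{g^{-1}x}}$. I would transport these into $\D$: because each $\varphi_e$ is a faithful semifunctor with $\varphi_e(\I^e)\unlhd\J^e$ and $\bt_g$ is an equivalence carrying $\J^{d(g)}$ onto $\J^{r(g)}$ (Lemma \ref{global}), both $\varphi_{r(g)}({_x\I^{r(g)}_x})$ and $\bt_g(\varphi_{d(g)}({_{g^{-1}x}\I^{d(g)}_{g^{-1}x}}))$ are \emph{unital} two-sided ideals of the $R$-algebra ${_x\J^{r(g)}_x}$, hence each is generated by a central idempotent. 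Condition (iii) of Definition \ref{defn2} identifies $\varphi_{r(g)}({_x\I^g_x})$ with the intersection of these two ideals, which is generated by the product of the two central idempotents and is therefore again unital. Pulling its identity back along the faithful (so injective) $\varphi_{r(g)}$ yields the required local identity of ${_x\I^g_x}$.

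For sufficiency, the substantive direction, I would first globalize the object action: by Definition \ref{defn2}(i) and \cite{NY} the partial action $\af_0$ on $\C_0$ has a universal globalization $\bt_0$ on a set $\D_0\supseteq\C_0$ (Remark \ref{obcon}) in which every object lies in the $\G$-orbit of some object of $\C$. The core construction is then the $R$-semicategory $\D$ carrying a global morphism action $\bt$. Modelled on the enveloping-action constructions of \cite{BP, CFM}, I would realize the morphism spaces ${_{\bar y}\D_{\bar x}}$ as the $R$-span of the $\bt_h$-translates $\bt_h(\varphi_{d(h)}({_{h^{-1}\bar y}\I^{d(h)}_{h^{-1}\bar x}}))$ dictated by Definition \ref{defn2}(v), with composition transported from $\C$ and the embeddings $\varphi_e\colon\I^e\hookrightarrow\J^e$ built from the local identities. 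Here the hypothesis is used decisively: the local identities $1^g_x\in{_x\I^g_x}$ supply the central idempotents that cut out, split and reglue the translated pieces, which is exactly what is needed to make the transported composition well defined and associative and to check that each $\varphi_e(\I^e)$ is an ideal.

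It then remains to verify (ii)--(v) of Definition \ref{defn2} and the globality of $\bt$ (equivalently $\J^g=\J^{r(g)}$, via Lemma \ref{global}(i)); conditions (iv) and (v) hold by construction, while (ii) and (iii) follow from the central-idempotent bookkeeping above, and associativity and globality are checked on the generating translates. For uniqueness up to equivalence, given a second globalization $\bt'$ I would use (v) to present both $\J^e$ and $\J'^e$ as sums of translated copies of the $\I^{d(h)}$ and define $\psi_e\colon\J'^e\to\J^e$ by matching these copies through (iii)--(iv); faithfulness of the $\varphi_e,\varphi'_e$ together with the local identities forces $\psi_e$ to be a well-defined equivalence intertwining $\bt'$ with $\bt$. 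I expect the necessity direction and the object-level globalization to be comparatively routine; the main obstacle is the morphism-level construction of $\D$ and the proof that the transported composition is associative and that $\bt$ is genuinely global, since---unlike for a group---one must glue the partial data coherently across the different principal groups $\G_e$, and it is precisely the unitality of every ${_x\I^g_x}$ that removes this obstruction.
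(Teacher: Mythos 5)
Your proposal follows essentially the same route as the paper's proof: necessity via condition (iii) of Definition \ref{defn2} together with the standard fact that an intersection of unital ideals is unital (pulled back along the faithful $\varphi_{r(g)}$), sufficiency via the universal globalization of $\alpha_0$ plus the enveloping construction of \cite{BP,CFM} in which the global pieces are sums of $\beta_h$-translates of copies of the $\mathcal{I}^{d(h)}$ embedded by means of the local identities, and uniqueness by matching those sums across the two globalizations. The only ingredient you leave implicit---the ambient function semicategory $\mathfrak{F}$ on which $\beta$ acts by translation and inside which the embeddings $\varphi_e(\eta)|_h=\alpha^{h^{-1}}(\eta\,{_x1^h_x})$ are concretely defined, resolving the apparent circularity of defining $\D$ via its own translates---is exactly what the paper imports from \cite{BP,CFM}, so your sketch fleshes out to the paper's argument.
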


\begin{proof} To show ($\Rightarrow$).
Let $\beta=(\mathcal{J}^g, \beta^g)_{g\in {\rm mor}(\G)}$  be a globalization for $\alpha$ and $\varphi_e:\mathcal{I}^e\rightarrow \mathcal{J}^e,\, e\in \mathcal{G}_0$  be the functor given by  (ii) in Definition \ref{defn2}. By  Remark \ref{obcon} we can assume that $\C_0\subseteq \D_0.$ 
 Take  $x\in \C_0,$ if $x\notin \C_0^{g},$ then  ${_x\mathcal{I}^g_x}=\{0\}$ and clearly has a local identity element. Now if $x\in \C_0^{g},$ then

$$\varphi_{r(g)}({_x\mathcal{I}^g_x})=\varphi_{r(g)}({_x\mathcal{I}^{r(g)}_x})\cap \beta_g(\varphi_{d(g)}({_{g^{-1}x}}\mathcal{I}^{d(g)}_{{g^{-1}x}}))$$
which implies that  ${_x\mathcal{I}^g_x}$ has a local identity element.

To show ($\Leftarrow$), assume that ${_x\mathcal{I}^g_x},$ for  $ g\in {\rm mor}(\mathcal{G}), $ and $x\in \C_0$ contains a local  identity element ${_x1^g_x}$. Consider first  $\beta_0=(Y, \beta_0^g)_{g\in {\rm mor}(\G)}$  a universal globalization of $\alpha_0.$ For $y\in Y$ and  $g\in \G^y,$ write  $\beta_0^g(y)=gy.$  Define the category $\mathfrak{F}$ as follows: $\mathfrak{F}_0=\C_0$ and for any $x,y \in \mathfrak{F}_0,$ we set   

$${_y\mathfrak{F}_x}=\left\{f:\G\to \prod_{g\in \G^y\cap \G^x}{_{gy}\C_{gx}}\mid  f(l)\in {_{l^{-1}y}\C_{l^{-1}x}}, \,\text{for each $l^{-1}\in\G^y\cap \G^x$}\right\},$$
where  $_{y}\C_{v}=\{0\}$ if $\{v,y\}$ is not a subset of $\C_0.$

Take $g\in {\rm mor}(\mathcal{G})$ and set 
$F^g=\{f\in \,_y\mathfrak{F}_x\mid f(h)=0,\forall h\notin \G(-,r(g))\}$. As in \cite{CFM}, the composition of morphisms is defined by $(k \circ l)(h)=k(h) \circ l(h), $ for all $ h\in {\rm mor}(\G).$
Note hat  $F^g$ is an ideal of $\mathfrak{F}$  
such that $F^g=F^{r(g)}$ and $F^g_0=\mathfrak{F}_0,$ for any $g\in {\rm mor}(\G). $ As usual, we denote the value $f(h)\in $ by $f|_h$, for all $f\in\,_y\mathfrak{F}_x$ and $h\in {\rm mor}\mathcal{(G)}$.  Now 
for $g\in {\rm mor}\mathcal{(G)}$ and $f\in F^{g^{-1}}$ let $\beta_g:F^{g^{-1}}\rightarrow F^g$ be the map given by
\[\beta_g(f)|_h=
  \begin{cases}
  f(g^{-1}h), & \text{if $h\in  \G(-,r(g))$}\\
  0, & \text{otherwise.}
  \end{cases}\]
As in the proof of \cite[Theorem 2.1]{BP}
one can show that $\beta_g$ is well defined and $\beta=(F^g, \beta_g)_{{\rm mor}(\mathcal{G})}$ is an action of $\mathcal{G}$ on $\mathfrak{F}$. 

Now, for each $e\in \mathcal{G}_0$, we define $\varphi_e:{\mathcal{I}^e}\rightarrow {F^e},$ as a map  $\varphi_e:{\mathcal{I}^e_0}\rightarrow {F^e_0},$ $\varphi_e$ is the inclusion. Moreover, $\varphi_e:{_y\mathcal{I}^e_x}\rightarrow {_yF^e_x}$ given by
\[\varphi_e(a)|_h=
  \begin{cases}
  \alpha^{h^{-1}}(\eta\,{_x1^h_x}), & \text{if $\{x,y\}\subseteq \I^e$ and $r(h)=e$}\\
  0, & \text{otherwise.}
  \end{cases}\]
For all $\eta \in {_y\mathcal{I}^e_x},$ and  $h\in{ \rm mor}\mathcal{(G)}$. The proof that  $\varphi_e:{_y\mathcal{I}^e_x}\rightarrow {_yF^e_x}$ is a faithful semifunctor is similar to  the one presented in  \cite[Theorem 4.6]{CFM}. 

Now for each morphism $g$ in $\G$ we consider  $E^g$ as  the subcategory of $F^g$ defined as follows: the set of objects $E^g_0$ of $E^g$ is equal to $\C_0$ and for $x,y\in E_0$  the set of morphisms from $x$ to $y$ is given by
$${_yE^g_x}=\sum_{r(h)=r(g)}\beta_h(\varphi_{d(h)}({_{h^{-1}y}\mathcal{I}^{d(h)}_{h^{-1}x}})),$$ for all $g\in \mathcal{G},$ where  ${_{h^{-1}y}}\mathcal{I}^{d(h)}_{h^{-1}x}=\{0\}$ if $\{x,y\}$ is not a subset of $\C_0.$ Following the assumptions given in the  proof of  \cite[Theorem 2.1]{BP} we consider the product $R$-semicategory  $\mathcal{T}=\prod_{e\in \mathcal{G}_0}E^e,$  and  $\iota_e: E^e\to T$ be the injective functor given by $\iota_e(x)=(x_l)_{l\in \mathcal{G}_0}$, with $x_e=x$ and $x_l=0$ for all $l\ne e$. Also, we  identify $E^e$ with $\iota_e(E^e)$ and $\varphi_e$   with $\iota_e \circ \varphi_e,$ we will denote also by the same $\beta_g$, given by $\iota_{r(g)}\circ \beta_g\mid_{E^{g^{-1}}}\circ \iota^{-1}_{d(g)}$ from $\iota_{d(g)}(E^{g^{-1}})\equiv E^{g^{-1}}$ onto $E^g\equiv \iota_{r(g)}(E^g)$.\\
Then 
$\beta=(E^g, \beta_g)_{g\in mor(\mathcal{G})}$ is a global action of $\mathcal{G}$ on $\mathcal{T}$. 
We need to show that $\beta$ is a globalization of $\alpha$. By our  construction the conditions (i) and (v) of  Definition \ref{defn2} are satisfied. Also, the proof of  properties  (ii), (iii) and (iv) are analogous to the group case (see the proof of   \cite[Theorem 4.6]{CFM}). 

To end the proof it is required to show the uniqueness (up to equivalence) of the globalization $\beta$ of $\alpha$. Now, suppose that $\beta'=(\mathcal{J}^g,\theta ^g)_{g\in {\rm mor}(\mathcal{G})}$ is a global action of $\G$ on  $\J$ with $\mathcal{T}_0=\J_0$  which is also a globalization of $\alpha.$ Then, for each $e\in \mathcal{G}_0$ there are   faithful semifunctors $\varphi'_e\colon \I^e\to \J^e$
 with $ {_y\mathcal{J'}^g_x}=
\sum_{r(h)=r(g)}\theta_h(\varphi'_{d(h)}({_{h^{-1}y}\mathcal{I}^{d(h)}_{h^{-1}x}})).$
We define the semifunctors $\eta_e:\J'^{e}\rightarrow E^e$ as the identity in the objects and $\eta_e:\,_y\J'^{e}_x\rightarrow\, _yE^e_x$  by 
$$\sum_{i=1}^n\beta'_{h_i}(\varphi'_{d(h_i)}(a_i))\mapsto \sum_{i=1}^n\beta_{h_i}(\varphi_{d(h_i)}(a_i)),$$
with $h_i\in \G(-,e)$ and $a_i\in {_{y}\mathcal{I}^{d(h_i)}_{x}}$, for all $1\leq i\leq n$.\\
As in the last part of the proof of Theorem 2.1 in \cite{BP}, it follows that $\eta_e$ is well defined and so it is an isomorphism semicategories. This completes the proof.
\end{proof}

\begin{exe}
We consider the $R$-semicategory $\mathcal{C}$ from Example \ref{exe1}. Note that $_u{\mathcal I}^g_v=_u{\mathcal I}^{r(g)}_v, \,_u{\mathcal I}^{g^{-1}}_v$ and $_u{\mathcal I}^{d(g)}_v$ have local identity elements. Then by Theorem \ref{teogloba}, there is an $R$-category $\mathcal{D}$ and a global action  $\beta=(E^g, \beta^g)_{g\in mor({\mathcal{G}})}$ which is a globalization of $\alpha$, where ${\mathcal D}_0={\mathcal C}_0$, that is, $\mathcal D^{r(g)}_0=\mathcal D^g_0=\mathcal D^{d(g)}_0=\mathcal D^{g^{-1}}_0=\C_0$, $\beta^h_0=id_{D^h_0}$, for  $h\in \{d(g), r(g)\}$  and  $\beta^g_0=\beta^{g^{-1}}_0: x\mapsto y, y\mapsto x.$
 Now for  $u,v\in {\mathcal D}_0$, $E^{d(g)}=E^{g^{-1}}={_u{\mathcal I}^{d(g)}_v}, \, E^{r(g)}=E^g={_u{\mathcal I}^{r(g)}_v}\oplus R$ and ${_u{\mathcal D}_v}={_u{\mathcal C}_v}\oplus Re_4=Re_1\oplus Re_2 \oplus Re_3 \oplus Re_4$ with  $e_1, e_2, e_3, e_4$ pairwise orthogonal central idempotents with sum $1_{_u{\mathcal D}_v}$, moreover
\begin{align*}
\beta^{d(g)}={\rm id}_{E^{d(g)}},& \quad \beta^{r(g)}={\rm id}_{E^{r(g)}},\\
\beta^g(ae_1+be_2)&=ae_3+be_4,\\
\beta^{g^{-1}}(ae_3+be_4)&=ae_1+be_2,
\end{align*}
for all $a,b \in R$.
\end{exe}

\begin{rem}\label{cfdp} Let $\alpha=(\mathcal{I}^g, \alpha^g)_{g\in {\rm mor}\mathcal{(G)}}$ be a partial action of a groupoid $\mathcal{G}$ on a  $R$-semicategory $\C$ having a globalization  $\beta=(\mathcal{J}^g, \beta^g)_{g\in {\rm mor}\mathcal{(G)}},$ 
\begin{itemize}
\item Assuming that $\mathcal{I}^e\subseteq \mathcal{J}^e$ for all $e\in \G_0.$ One has that $\beta_{\G_e}=(E^g, \beta^g)_{g\in \G_e}$ acts globally on the $R$-semicategory $E^e,$ where $E^e_0=\C_0,$ and ${_yE^g_x}=\sum\limits_{r(h)=e}\beta_h({_{h^{-1}y}\mathcal{I}^{d(h)}_{h^{-1}x}}),$ for all $g\in \G_e$ and $x,y\in \C_0.$ Then the action of $\G_e$ on the $R$-semicategory $\mathcal{E}^e$ of $E^e,$ where $\mathcal{E}^e_0=\C_0,$ and  for all $g\in \G_e$ and $x,y\in \C_0,$ we have ${_y\mathcal{E}^g_x}=\sum_{h\in \G_e}\beta_h({_{h^{-1}y}\mathcal{I}^e_{h^{-1}x}}),$  is a globalization of the partial action $\alpha_{\G_e}$ of $\G_e$ on  $\mathcal{I}^e,$ (see \cite[Definition 4.1]{CFM}), then  Theorem \ref{teogloba} is a generalization of \cite[Theorem 4.6]{CFM}.
\item In the case that  $\C_0=\{x\}$ then  follows that  $\C$ is an $R$-algebra and $\alpha_0$ has the identity in $\C_0$ as an enveloping action,  thus  \cite[Theorem 2.1]{BP}  is a consequence  Theorem \ref{teogloba}.
\end{itemize}
\end{rem}

\section{The partial skew groupoid semicategory}

In this section we introduce the definition of partial skew groupoid semicategory, we give a sufficient associativity condition   and  show an isomorphism between algebras associated to them. 

\begin{defn}
Let $\alpha=(\I^g, \alpha^g)_{g\in {\rm mor}{(\G)}}$ be partial action of a groupoid $\mathcal{G}$ on a  $R$-semicategory $\C.$ We define the skew groupoid (non-necessarily associative)  semicategory $C\ast_{\alpha}\mathcal{G}$ by:
\begin{enumerate}
\item[(i)] $(\C\ast_{\alpha}\mathcal{G})_0=\C_0$.
\item[(ii)] For each $x,y \in \C_0, $ we set ${_y(C\ast_{\alpha}\mathcal{G})_x}=\bigoplus\limits_{g\in \G^x}{_y\mathcal{I}^g_{gx}}$.
\end{enumerate}

For  $t,g\in {\rm mor}{(\G)}$ and $x,y\in \C_0^{g^{-1}}\cap \C_0^{t^{-1}}$ 
we define the product of  $f\in {_z\mathcal{I}^t_{ty}}$ and $ l\in {_y\mathcal{I}^g_{gx}}$  by the rule
\[fl=
  \begin{cases}
  \alpha^t(\alpha^{t^{-1}}(f) l)\in {_z\mathcal{I}^{tg}_{(tg)x}} & \text{if $(t,g)\in \mathcal{G}^2$} \,\, \,and \,\,\,x\in \C_0^{(tg)^{-1}},\\
  0, & \text{otherwise.}
  \end{cases}\]
\end{defn}

\begin{exe}
We consider 
the fo\-llo\-wing R-semicategory $\C$ with
\begin{enumerate}
\item $\C_0=\{x, y\}.$
\item Given $ u,v \in \C_0$ let ${_u\C_v}=Re_1\oplus Re_2\oplus Re_3\oplus Re_4,$  where $e_1,e_2,e_3, e_4$ are pairwise orthogonal central idempotents with sum 1.
\item For all $u,v,w \in \C_0$ an $R$-bilinear map $\cdot: {_u\C_v}\times {_v\C_w}\to {_u\C_w};$ given by multiplication.
\end{enumerate}
Take the groupoid $\G=\{g_1, g_2, g_3\}\, \,\text{with}\, \,\G_0=\{g^{-1}, g_2\}, \, g^{-1}_3=g_3, \, g_3g_3=g_2$.   Then 
 $\G$ acts partially on $\C_0$ via $\af_0,$ where: 
$$\C_0^{g_1}=\{x\},\,\,\, \C_0^{g_2}=\{y\}, \,\,\,\text{and}\,\,\, \C_0^{g_3}=\C_0$$ 
and $\alpha_0^{g_3}:\C_0^{g_3}\to \C_0^{g_3}: \, y \mapsto x \, \text{and}\, x\mapsto y ;\, \alpha_0^{g_1}={\rm id}_{\C_0^{g_1}}$ and $\alpha_0^{g_2}={\rm id}_{\C_0^{g_2}}$.\\
Now consider the ideals of $\C$  given by 
\begin{itemize}
\item $_u{\mathcal I}^{g_1}_v={_u\C_v}$ if $(u,v)=(x,x)$ and  $_u{\mathcal I}^{g_1}_v=0,$ if $(u,v)\neq (x,x). $
\item $_u\mathcal{I}^{g_2}_v=Re_2\oplus Re_3\oplus Re_4$ if $(u,v)=(y,y)$ and $_u\mathcal{I}^{g_2}_v=0,$ if $(u,v)\neq (y,y). $
\item $_u\mathcal{I}^{g_3}_v=Re_2\oplus Re_3,$ for all $u,v\in \C_0.$
\end{itemize}

Then the family $\alpha=(\mathcal{I}^g, \alpha^g)_{g\in {\rm mor}(\G)}$ is a partial action of $\G$ on $\C,$ where   $\alpha^{g_3}(ae_2+be_3)=be_2+ae_3, \, \alpha^{g_1}=id_{\mathcal{I}^{g_1}}, \, \alpha^{g_2}=id_{\mathcal{I}^{g_2}},$ for each $a, b\in R$.

We describe the skew category $\C\ast_{\alpha}\mathcal{G}$ thus
\begin{enumerate}
\item[(i)] $(\C\ast_{\alpha}\mathcal{G})_0=\{x,y\}$.
\item[(ii)] Finally for each $x,y \in \C_0:$\begin{align*}
{_x(\C\ast_{\alpha}\mathcal{G})_x}&={_x{\mathcal I}^{g_1}_x}\oplus{_x{\mathcal I}^{g_3}_y},\\
{_x(\C\ast_{\alpha}\mathcal{G})_y}&={_x{\mathcal I}^{g_2}_y}\oplus{_x{\mathcal I}^{g_3}_x},\\
{_y(\C\ast_{\alpha}\mathcal{G})_y}&={_y{\mathcal I}^{g_2}_y}\oplus{_y{\mathcal I}^{g_3}_x},\\
{_y(\C\ast_{\alpha}\mathcal{G})_x}&={_y{\mathcal I}^{g_1}_x}\oplus{_y{\mathcal I}^{g_3}_y}.
\end{align*}
\end{enumerate}

\end{exe}

\begin{defn}
Let $\mathcal{G}$ be a groupoid and $\alpha=(\I^g, \alpha^g)_{g\in {\rm mor}(\mathcal{G})}$ a partial action of $\mathcal{G}$ on an $R$-semicategory $\C.$ We say that $\alpha$ is  associative if the composition of maps in $\C\ast_{\alpha}\mathcal{G}$  associative.
\end{defn}
By a routine calculation one can show that every global action is associative.
\begin{rem}
As a consequence of the definition above, if $\C$ is  R-semicategory and the partial action $\alpha$ is    associative, then $\C\ast_{\alpha}\mathcal{G}$ is a  $R$-semicategory and we call it the partial skew groupoid semicategory.
\end{rem}

For further reference, we give the following.
\begin{lem} \label{skewcat} Let $\alpha=(\I^g, \alpha^g)_{g\in {\rm mor}{(\G)}}$ be an associative  partial action of a groupoid $\mathcal{G}$ on an  $R$-semicategory $\C.$ Suppose that $\G_0$ is finite and that  for any  $x\in \C_0$ and $e\in \G_0\cap \G^x$ the ideal $_x\I^e_x$ has a local identity. Then   $\C\ast_{\alpha}\mathcal{G}$ is a  $R$-category . 
\end{lem}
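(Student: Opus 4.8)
The plan is to produce, for every object $x\in\C_0$, a two-sided identity morphism $1_x\in{_x(\C\ast_\alpha\G)_x}$ and to verify the two unit laws on homogeneous components. Associativity of the composition is granted by hypothesis (so $\C\ast_\alpha\G$ is already an $R$-semicategory), hence the only thing missing for it to be an $R$-category is the existence of these units.

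First I would define the candidate. By (PA3) the set $\G_0\cap\G^x$ is non-empty, and since $\G_0$ is finite it is finite; this is the only role of the finiteness hypothesis, as it guarantees that the sum below is a bona fide element of the direct sum ${_x(\C\ast_\alpha\G)_x}=\bigoplus_{g\in\G^x}{_x\I^g_{gx}}$. For each $e\in\G_0\cap\G^x$ one has $ex=x$, so ${_x\I^e_{ex}}={_x\I^e_x}$, which by hypothesis carries a local identity ${_x1^e_x}$. I then set
$$1_x=\sum_{e\in\G_0\cap\G^x}{_x1^e_x}\ \in\ {_x(\C\ast_\alpha\G)_x}.$$

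Next, the left unit law. On a homogeneous component $l\in{_x\I^g_{gu}}$ of a morphism from $u$ to $x$ (so $g\in\G^u$ and, for $l\neq0$, $x\in\C_0^g$ by Definition \ref{apgc}(ii)), I compute $1_x\cdot l=\sum_e {_x1^e_x}\cdot l$. By the product rule the term ${_x1^e_x}\cdot l$ vanishes unless $(e,g)\in\G^2$, i.e. unless $e=r(g)$; moreover $r(g)\in\G_0\cap\G^x$ because $x\in\C_0^g\subseteq\C_0^{r(g)}$ (the object-level inclusion $D_g\subseteq D_{r(g)}$), so precisely the term $e=r(g)$ survives. Since $r(g)g=g$ and $\alpha^{r(g)}={\rm id}$ (Definition \ref{apgc}(iv)), the product rule collapses to the composite ${_x1^{r(g)}_x}\cdot l$ taken in $\C$; as $\I^g\unlhd\I^{r(g)}$ we have $l\in{_x\I^{r(g)}_{gu}}$, and the left-local-identity property of ${_x1^{r(g)}_x}$ gives ${_x1^{r(g)}_x}\cdot l=l$. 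Hence $1_x\cdot l=l$. The right unit law is symmetric: for a homogeneous $f\in{_v\I^t_{tx}}$ (so $t\in\G^x$), the term $f\cdot{_x1^e_x}$ survives only for $e=d(t)$ (here $(t,e)\in\G^2$ forces $d(t)=r(e)=e$), and $d(t)\in\G_0\cap\G^x$ because $x\in\C_0^{t^{-1}}\subseteq\C_0^{d(t)}$. Using the product rule with $\alpha^t\circ\alpha^{t^{-1}}={\rm id}$ (Lemma \ref{global}(ii)) and $\I^{t^{-1}}\unlhd\I^{d(t)}$, the computation reduces to applying the right-local-identity property of ${_x1^{d(t)}_x}$ to $\alpha^{t^{-1}}(f)\in{_{t^{-1}v}\I^{d(t)}_x}$, yielding $f\cdot{_x1^{d(t)}_x}=f$, so $f\cdot 1_x=f$. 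Extending both equalities $R$-bilinearly over all homogeneous components shows $1_x$ is a two-sided identity at $x$, whence $\C\ast_\alpha\G$ is an $R$-category.

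I expect the main obstacle to be not the algebra but the bookkeeping of indices: one must check that in each product exactly one component of $1_x$ contributes, and that the relevant identity $r(g)$ (resp. $d(t)$) really lies in $\G_0\cap\G^x$, so that the hypothesized local identity is available and lands in the correct morphism space. The two object-level inclusions $\C_0^g\subseteq\C_0^{r(g)}$ and $\C_0^{t^{-1}}\subseteq\C_0^{d(t)}$, together with the morphism-level inclusion $\I^g\unlhd\I^{r(g)}$, are exactly what allow the local identities of the $\I^e$ ($e\in\G_0$) to absorb the $\alpha$-twist built into the skew composition.
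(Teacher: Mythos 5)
Your proposal is correct and follows essentially the same route as the paper: you build the unit at $x$ as the finite sum $\sum_{e\in\G_0\cap\G^x}{_x1^e_x}$ of the hypothesized local identities, observe that in each product with a homogeneous morphism exactly one summand survives (the one indexed by $r(g)$, resp.\ $d(t)$, which lies in $\G_0\cap\G^x$ via $\C_0^{g}\subseteq\C_0^{r(g)}$, resp.\ $\C_0^{t^{-1}}\subseteq\C_0^{d(t)}$), and then use the ideal inclusions and the local-identity property to collapse the skew product, exactly as in the paper's computation $fu=\alpha^t(\alpha^{t^{-1}}(f)\,{_x1^{d(t)}_x})=\alpha^t(\alpha^{t^{-1}}(f))=f$. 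Your write-up is in fact slightly more detailed, spelling out both unit laws and the bookkeeping that the paper compresses into ``analogously.''
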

\begin{proof} Let $x\in \C_0,$ $e\in \G_0\cap \G^x$  and  $_x1^e_x\in\, _x\I^e_x$ be a local identity. Write $u=\sum_{e\in \G^x}{_x1^e_x},$ then for  $z\in \G_0$ and $f\in \,_z\I^t_{tx}\in\, _z\C\ast_{\alpha}\mathcal{G}_x,$ we shall check that $fu=f,$ which is clear if $f=0.$ Now if $f\neq 0,$ then $x\in \C_0^{t^{-1}}\subseteq \C_0^{d(t)}$  thus $d(t)\in \G^x$ and $fu=\alpha^t(\alpha^{t^{-1}}(f) _x1^{d(t)}_x)=\alpha^t(\alpha^{t^{-1}}(f))=f.$ Analogously one shows that $ug=g$ for each $g\in \,_x(\C\ast_{\alpha}\mathcal{G})_z.$\end{proof}

\subsection{The multiplier ring and the  associativity  of $\C\ast_{\alpha}\mathcal{G}.$  }
Now we are interested in the associativity of $\C\ast_{\alpha}\mathcal{G}.$ For this we need to recall the mulriplier algebra associated to a ring.
 
Let $A$ be  a non-necessarily unital ring.
As in \cite{DRS}, for homomorphisms of left $A$-modules we use the right-hand side notation. That is,  given a left $A$-module homomorphism 
$\gamma\colon _A M\to _A\hspace*{-0.17cm} N$ and $x\in M$ we write $x\gamma$ instead of $\gamma x$; while for homomorphisms of right $A$-modules we use the usual notation. Thus, we read composition of left module homomorphism from left to right, and we read composition of right module homomorphism in the usual right to left way.
\smallbreak

Let $A$ be a ring. The multiplier ring $M(A)$ of $A$ is the set
\[M(A)=\{(R,L)\in {\rm End}(_A A)\times {\rm End}(A_A) \mid (aR)b=a(Lb)\, \text{ for all } a,b\in A\},\]
with component-wise multiplication and addition; (see e.~g. \cite[Section 2]{DE} for details). For a multiplier $\gamma=(R,L)\in M(A)$ and $a\in A$ we set: $a\gamma=aR$ and $\gamma a=La$. Consequently,
\begin{equation*}\label{asmul}(a\gamma)b=a(\gamma b),\end{equation*} for all $a,b\in A$. Also, an element $a\in A$ determines the multiplier $(R_a,L_a)\in M(A)$, where $xR_a=xa$ and $L_ax=ax$, for all $x\in A$.

\begin{defn}\cite[Definition 2.4]{DE} We say that $A$ is $(\Ll,\R)$ associative, if given any two multipliers $(L,R)$ and $(L',R')$ one has that $R'\circ L = L'\circ R.$
\end{defn}

Let $\C$ be an  $R$-semicategory, in \cite{CM} the authors introduced the $R$-algebra $a(\C)=\bigoplus_{x,y\in \C_0}{_y\C_x}$ provided with the matrix product induced by the composition in $\C$.   Notice that if $\C$ is a category with a finite number of objects, then $a(\C)$ is a unital $R$-algebra.

\begin{defn} We say that $\C$ is $(\Ll,\R)$ associative if $a(\C)$ is $(\Ll,\R)$ associative.
\end{defn}

The following result gives a necessary condition for the associativity of  $\C\star_{\alpha}\mathcal{G}.$

\begin{prop}\label{assoc}
If $\alpha=(\I^g, \alpha^g)_{g\in {\rm mor}{(\G)}}$ be partial action of a groupoid $\mathcal{G}$ on a  $R$-semicategory $\C$ such that  $\mathcal{I}^g$ is $(\mathcal{L},\mathcal{R})-$associative for every $g\in \mathcal{G}$, then the partial skew groupoid category $\C\star_{\alpha}\mathcal{G}$ is associative.
\end{prop}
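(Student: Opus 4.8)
The plan is to verify $(fg)h=f(gh)$ directly, after reducing to homogeneous morphisms. Since the composition in $\C\ast_\alpha\mathcal{G}$ is $R$-bilinear and defined componentwise on the direct sums ${_y(\C\ast_\alpha\mathcal{G})_x}=\bigoplus_{g\in\G^x}{_y\mathcal{I}^g_{gx}}$, it suffices to take three composable homogeneous elements $f\in{_w\mathcal{I}^s_{sz}}$, $g\in{_z\mathcal{I}^t_{ty}}$ and $h\in{_y\mathcal{I}^r_{rx}}$ and prove associativity for them. First I would dispose of the degenerate cases: using the groupoid identities $d(st)=d(t)$, $r(tr)=r(t)$ together with the object-level partial action axioms of Definition \ref{ny}, I would check that the conditions forcing either bracketing to vanish — namely $(s,t)\in\G^2$, $(t,r)\in\G^2$, $y\in\C_0^{(st)^{-1}}$ and $x\in\C_0^{(tr)^{-1}}$ — are symmetric, so that $(fg)h$ and $f(gh)$ are simultaneously zero whenever the chain fails to compose.

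In the generic case I would expand both products using the defining rule. Writing $fg=\alpha^s(\alpha^{s^{-1}}(f)\,g)$ and then $(fg)h=\alpha^{st}\big(\alpha^{(st)^{-1}}(fg)\,h\big)$, I would use Lemma \ref{global}(ii) and the composition axiom (vi) of Definition \ref{apgc} applied to the pair $(t^{-1},s^{-1})$ to identify $\alpha^{(st)^{-1}}=\alpha^{t^{-1}}\alpha^{s^{-1}}$ on the relevant ideal, the relation $\alpha^{s^{-1}}\alpha^{s}=\mathrm{id}$, and the same axiom applied to $(s,t)$ giving $\alpha^{st}=\alpha^{s}\alpha^{t}$. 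Since each $\alpha^g$ is an equivalence of $R$-semicategories and hence preserves composition in $\C$, the successive factorizations collapse both bracketings to the single expression $\alpha^{s}\big(\alpha^{s^{-1}}(f)\cdot\alpha^{t}(\alpha^{t^{-1}}(g)\,h)\big)=f(gh)$, which is exactly what is required.

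The one manipulation that is not purely formal is the identity $\alpha^{t^{-1}}\big(\alpha^{s^{-1}}(f)\,g\big)=\alpha^{t^{-1}}\big(\alpha^{s^{-1}}(f)\big)\,\alpha^{t^{-1}}(g)$: the factor $\alpha^{s^{-1}}(f)$ need not lie in $\mathcal{I}^{t}$, the domain on which $\alpha^{t^{-1}}$ is defined, so $\alpha^{t^{-1}}$ cannot be applied to it literally. This is where the hypothesis enters. I would regard $\alpha^{s^{-1}}(f)$ as acting on the ideal $a(\mathcal{I}^{t})$ by left and right composition, thereby determining a multiplier of $a(\mathcal{I}^{t})$, and extend $\alpha^{t}$ and $\alpha^{t^{-1}}$ to the multiplier ring $M(a(\mathcal{I}^{t}))$ as in \cite{DE}. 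The discrepancy between the two bracketings then becomes precisely an expression of the form $R'\circ L-L'\circ R$ for a pair of multipliers of $a(\mathcal{I}^{t})$, and the assumption that $\mathcal{I}^{t}$ is $(\mathcal{L},\mathcal{R})$-associative — that is, that $a(\mathcal{I}^{t})$ is $(\mathcal{L},\mathcal{R})$-associative in the sense of \cite[Definition 2.4]{DE} — forces this discrepancy to vanish.

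The main obstacle is exactly this last bookkeeping step: setting up the multiplier pair on the correct ideal, the one attached to the middle morphism $t$, and verifying that the failure of the naive factorization is captured by a single $(\mathcal{L},\mathcal{R})$-defect, so that associativity of one ideal at a time suffices. Once the multiplier extensions of $\alpha^{t}$ and $\alpha^{t^{-1}}$ are shown to be mutually inverse and compatible with composition in $\C$, the argument follows the scheme of the associativity proof for partial skew groupoid rings in \cite{BP}, transported to the semicategory setting by working inside the algebra $a(\mathcal{I}^{t})$.
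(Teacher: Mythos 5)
Your proposal is correct and follows essentially the same route as the paper's proof: reduce to homogeneous composable triples, dispose of the degenerate vanishing cases, cancel the outermost $\alpha$ using injectivity and axiom (vi) of Definition \ref{apgc}, and recognize the residual identity as the commutation of the left multiplier determined by the transported left-hand factor with the right multiplier $\alpha^{t}\circ\mathcal{R}_{h}\circ\alpha^{t^{-1}}$ (obtained via \cite[Proposition 2.7]{DE}) on the ideal attached to the middle morphism, which is exactly what $(\mathcal{L},\mathcal{R})$-associativity of that ideal guarantees. Your identification of the middle ideal as the correct place to set up the multiplier pair matches the paper precisely (and even fixes a small slip in the paper's last line, which names $\mathcal{I}^h$ where the middle ideal is meant).
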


\begin{proof}
Now, for to prove that $\C\star_{\alpha}\mathcal{G}$ is associative is enough to verify that
\begin{equation}\label{e1}
(f l) k=f (l k),
\end{equation}
for any non-zero  morphisms $f,l$ and $k$ that are composable. Write $f\in {_z\mathcal{I}^t_{ty}},  l\in {_y\mathcal{I}^g_{gx}}$ and  $ k\in {_x\mathcal{I}^h_{hu}},$ where $z,y, x,u\in \C_0$ and $t,g,h$ are morphisms in $\C.$  Note that if $(t,g)\notin \mathcal{G}^2$ (resp. $(tg,h)\notin \mathcal{G}^2$) then $(t,gh)\notin \mathcal{G}^2$ (resp. $(t,h)\notin \mathcal{G}^2$) and in this case (\ref{e1}) follows trivially. Thus, assume that $(t,g)$ and $(tg,h)\in \mathcal{G}^2$. The left-hand side of (\ref{e1}) equals to 
\begin{equation*}
(fl) k=\alpha^{tg}(\alpha^{(tg)^{-1}}(\alpha^t
(\alpha^{t^{-1}}(f)l))k).
\end{equation*}
But $\alpha^t
(\alpha^{t^{-1}}(f) l)\in \alpha^t(\mathcal{I}^{t^{-1}}\cap \mathcal{I}^g)\subseteq \mathcal{I}^{tg}$
and by $(iv)$ of the Definition  \ref{apgc} we have
\begin{align*}
\alpha^{(tg)^{-1}}(\alpha^t
(\alpha^{t^{-1}}(f)l))&=\alpha^{g^{-1}}(\alpha^{t^{-1}}(\alpha^t(\alpha^{t^{-1}}
(f) l)))\\
&=\alpha^{g^{-1}}(\alpha^{t^{-1}}(f)l).
\end{align*}
Hence $(fl)k$ equals to $
\alpha^{tg}(\alpha^{g^{-1}}(\alpha^{t^{-1}}(f) l))=\alpha^t(\alpha^g(\alpha^{g^{-1}}(\alpha^{t^{-1}}(f)l))).$

Calculating the right hand side of \eqref{e1} we get
\begin{align*}
f (l k)&=f\circ (\alpha^{g}(\alpha^{g^{-1}}(l)k))\\
&=\alpha^t(\alpha^{t^{-1}}(f) (\alpha^g(\alpha^{g^{-1}}(l)k))).
\end{align*}
And applying $\alpha^{t^{-1}}$ we have that \eqref{e1} holds if and only if
\begin{equation*}
\alpha^g(\alpha^{g^{-1}}(\alpha^{t^{-1}}(f)l) k)=\alpha^{t^{-1}}(f)(\alpha^g(\alpha^{g^{-1}}(l)k)),
\end{equation*}

Now note that $\alpha^{t^{-1}}(f)$ runs over all the elements of $_{t^{-1}z}{\I}^{t^{-1}}_y$. Consequently, \eqref{e1} is equivalent to the following:
\begin{equation}\label{e3}
\alpha^g(\alpha^{g^{-1}}(f'l)k)=f \alpha^g(\alpha^{g^{-1}}(l) k).
\end{equation}
for all $g, t, h$ morphisms  in $ \mathcal{G}$ such that $(t,g), (tg,h)\in \mathcal{G}^2$ and all $f'\in _{t^{-1}z}{\I}^{t^{-1}}_y,  l\in {_y\mathcal{I}^g_{gx}}$ and  $ k\in {_x\mathcal{I}^h_{hu}},$ where $z,y, x,u\in \C_0$.\\
Having in mind that  that $\mathcal{I}^{t^{-1}}\subseteq \mathcal{I}^{d(t)}=\mathcal{I}^{r(g)},$ $ \mathcal{I}^g$ is an ideal of $\mathcal{I}^{r(g)}$ and $\mathcal{I}^{g^{-1}}$ is an ideal of $\mathcal{I}^{d(g)}=\mathcal{I}^{d(tg)}=\mathcal{I}^{r(h)}$. Then the restriction of $\mathcal{R}_{f}$  (resp. $\mathcal{R}_{k}$) to $\mathcal{I}^g$  (resp. $\mathcal{I}^{g^{-1}}$) is a right multiplier of $M(\mathcal{I}^g)$ (resp. of $M(\mathcal{I}^{g^{-1}})$) and, consequently, (\ref{e3}) is equivalent to  the equality
\begin{equation}\label{e4}(\alpha^g \circ \mathcal{R}_{k} \circ \alpha^{g^{-1}})\circ \mathcal{L}_{f'}=\mathcal{L}_{f'}\circ (\alpha^g\circ \mathcal{R}_{k}\circ \alpha^{g^{-1}}),\end{equation}
is valid on $\mathcal{I}^g,$ for all $g$ morphism in $ \mathcal{G}$ and  $f'\in _{t^{-1}z}{\I}^{t^{-1}}_y$ and $ k\in {_x\mathcal{I}^h_{hu}}$.  However the last relation holds since $\alpha^g \circ \mathcal{R}_{k} \circ \alpha^{g^{-1}}$ is a right multiplier of $M(\mathcal{I}^g)$ (thanks to  \cite[Proposition 2.7]{DE}) and by the assumption that  $\mathcal{I}^h$ is $(\mathcal{L}, \mathcal{R})$-associative for any $h\in \mathcal{G}.$
\end{proof}
\medskip

Given a partial action $\af=(D_g, \af_g)_{g\in {\rm mor }(G)}$ of a groupoid $\G$ on a ring $A$, we recall from \cite{BFP} that the {\it partial skew groupoid ring} is the direct sum $$A*_\alpha \G:=\bigoplus_{g\in {\rm mor }(G)}D_g\delta_g,$$
where the $\delta_g '$s are symbols, with the usual addition and the multiplication given by
\[
(a_g\delta_g)(b_h\delta_h)=\left\{\begin{array}{lc}
                       \alpha_g(\alpha_{g^{-1}}(a_g)b_h)\delta_{gh}, & \text{if } d(g)=r(h) \\
                       0, & \text{otherwise}
                     \end{array}\right.\]
for all $g,h\in \G$, $a_g\in D_g$ and $b_h\in D_h$. 
The following results states that the algebra associated to the partial skew groupid category is a partial skew groupoid ring.

\begin{prop}\label{cattoalg}
Let $\alpha=(\mathcal{I}^g, \alpha^g)_{g\in {\rm mor}( \mathcal{G})}$ be an associative  partial action of a groupoid $\mathcal{G}$ on an  $R$-semicategory $\C$. 
Then $\mathcal{G}$ acts partially on $a(\C)$ 
 and the  $R$-algebras $a(\C\ast_{\alpha}G)$ and  $a(\C)\ast_{\alpha}\mathcal{G}$ are  isomorphic.
\end{prop}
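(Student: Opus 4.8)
The plan is to establish the two claimed facts in order: first that $\mathcal{G}$ acts partially on the $R$-algebra $a(\C)$, and then that the algebras $a(\C\ast_\alpha\G)$ and $a(\C)\ast_\alpha\G$ are isomorphic. For the first part, recall that $a(\C)=\bigoplus_{x,y\in\C_0}{_y\C_x}$. I would define, for each $g\in{\rm mor}(\G)$, the ideal $D_g:=\bigoplus_{x,y\in\C_0}{_y\I^g_x}\subseteq a(\C)$, and the map $\af_g\colon D_{g^{-1}}\to D_g$ as the direct sum of the component maps $\alpha^g$ restricted to the relevant hom-spaces, using condition (iii) of Definition \ref{apgc} to see that $\alpha^g({_y\I^{g^{-1}}_x})\subseteq {_{gy}\I^g_{gx}}$ so that the image lands in $D_g$. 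The verification that $(D_g,\af_g)_{g\in{\rm mor}(\G)}$ satisfies Definition \ref{BP} then amounts to repackaging the semicategorical axioms (iv), (v), (vi) of Definition \ref{apgc} as the ring-theoretic axioms (i), (ii), (iii) of Definition \ref{BP}; the ideal conditions follow from $\I^g\unlhd\I^{r(g)}\unlhd\C$ together with Lemma \ref{global}(ii),(iii).

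For the isomorphism, I would write down both algebras explicitly and construct a natural candidate map. On one side, $a(\C\ast_\alpha\G)=\bigoplus_{x,y\in\C_0}{_y(\C\ast_\alpha\G)_x}=\bigoplus_{x,y\in\C_0}\bigoplus_{g\in\G^x}{_y\I^g_{gx}}$. On the other side, using the description of $D_g$ above, $a(\C)\ast_\alpha\G=\bigoplus_{g\in{\rm mor}(\G)}D_g\delta_g=\bigoplus_{g\in{\rm mor}(\G)}\bigoplus_{x,y\in\C_0}{_y\I^g_x}\,\delta_g$. The key bookkeeping observation is that reindexing via the object map $x\mapsto gx$ matches the two direct sums: a summand ${_y\I^g_{gx}}$ of $a(\C\ast_\alpha\G)$ (with $x\in\C_0^{g^{-1}}$, so $gx$ is defined) corresponds to the summand ${_y\I^g_{gx}}\,\delta_g$ of $a(\C)\ast_\alpha\G$. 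I would therefore define $\Phi\colon a(\C\ast_\alpha\G)\to a(\C)\ast_\alpha\G$ on a generator $l\in{_y\I^g_{gx}}$ by $\Phi(l)=l\,\delta_g$ and extend $R$-linearly, with the convention that summands where $gx$ is undefined vanish on both sides.

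The substantive step is checking that $\Phi$ respects multiplication. I would take generators $f\in{_z\I^t_{ty}}$ and $l\in{_y\I^g_{gx}}$ and compare $\Phi(fl)$ with $\Phi(f)\Phi(l)$. By the definition of the product in $\C\ast_\alpha\G$, when $(t,g)\in\G^2$ and $x\in\C_0^{(tg)^{-1}}$ we have $fl=\alpha^t(\alpha^{t^{-1}}(f)\,l)\in{_z\I^{tg}_{(tg)x}}$, so $\Phi(fl)=\alpha^t(\alpha^{t^{-1}}(f)\,l)\,\delta_{tg}$; meanwhile $\Phi(f)\Phi(l)=(f\delta_t)(l\delta_g)$, which by the partial skew groupoid ring multiplication equals $\alpha_t(\alpha_{t^{-1}}(f)\,l)\,\delta_{tg}$ precisely when $d(t)=r(g)$, i.e. when $(t,g)\in\G^2$. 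Since $\alpha_t,\alpha_{t^{-1}}$ are the component maps $\alpha^t,\alpha^{t^{-1}}$ under the identification of $D_t$ with the direct sum of hom-spaces, the two expressions agree; and the ``otherwise'' cases (composition undefined, or $x\notin\C_0^{(tg)^{-1}}$) give $0$ on both sides. That the matching of index sets is exactly compatible with the case distinction in both product formulas is the main obstacle, since one must verify that $(t,g)\in\G^2$ together with $x\in\C_0^{(tg)^{-1}}$ is the same condition governing nonvanishing on the skew-ring side; this uses that the partial action on objects forces ${_y\I^g_{gx}}=0$ unless $\{y,gx\}\subseteq\C_0^g$, matching Definition \ref{apgc}(ii). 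Finally $\Phi$ is manifestly an $R$-linear bijection on the matched direct-sum decompositions, so it is the desired algebra isomorphism.
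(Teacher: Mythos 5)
Your proposal is correct and follows essentially the same route as the paper: the paper likewise defines $a(\C)_g=\bigoplus_{x,y\in\C_0}{_y\I^g_x}$ with the componentwise maps $\alpha_g$ (your $D_g$ and $\af_g$), verifies Definition \ref{BP} from the axioms of Definition \ref{apgc}, and then takes the map $\varphi(f_g)=f_g\delta_g$ for $f_g\in{_y\I^g_{gx}}$ with inverse $\psi(f_g\delta_g)=f_g$, which is exactly your $\Phi$. If anything, your treatment of the multiplicativity check (including the degenerate case $x\notin\C_0^{(tg)^{-1}}$ via Definition \ref{apgc}(ii)) is more detailed than the paper's, which simply asserts that $\varphi$ is a well-defined homomorphism of $R$-algebras.
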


\begin{proof}
For each $g\in {\rm mor}( \mathcal{G})$ let $a(C)_g=\bigoplus_{x,y\in \C_0}{_y\mathcal{I}^g_{x}}$. Note that as ${\mathcal{I}^{r(g)}}$ is an  ideal of $\C$ and ${\mathcal{I}^g}$ is a ideal of ${\mathcal{I}^{r(g)}}$, then
$a(\C)_{r(g)}$ is an ideal of $a(\C)$ and $a(\C)_g$ is an ideal of $a(C)_{r(g)}$,  $\alpha_g:a(\C)_{g^{-1}}\rightarrow a(\C)_g$, defined by $\alpha_g|_{{_y\mathcal{I}^{g^{-1}}_{x}}}=\alpha^g|_{{_y\mathcal{I}^{g^{-1}}_{x}}}$, for all $x,y\in \C^{g^{-1}}_0$ and extended to $a(\C)_{g^{-1}}$ by linearity, is an isomorphism of ideals.\\
Now we show that $\alpha_{a(\C)}=(a(c)_g, \alpha_g)_{g\in {\rm mor}( \mathcal{G})}$ is a partial action of $\mathcal{G}$ on $a(\C)$. The first condition in  Definition \ref{BP}  is obvious. For the second condition, suppose that
$f\in \alpha_{h^{-1}}(a(\C)_h\cap a(\C)_{g^{-1}})$. As ${_yI_x}={_y\C_x} \cap I$, we can assume that $f\in\, _y\C_x$, so $\alpha_h(f)\in {_{hy}\C_{hx}}$ and consequently $f\in a(\C)_{(gh)^{-1}}$. Finally the condition (iii) of Definition \ref{BP}  is also clear. 

For the second assertion, we define $\varphi:a(\C\ast_{\alpha}G)\rightarrow a(\C)\ast_{\alpha}\mathcal{G}$ by $\varphi(f_g)=f_g\delta_g$, where $f_g$ is a morphism in ${_y\mathcal{I}^{g}_{gx}}\subseteq {_y(\C\ast_{\alpha}\mathcal{G})_x}$
We  have that $\varphi$ is a well defined homomorphism of $R$-algebras. Finally, $\psi:a(\C)\ast_{\alpha}\mathcal{G}\rightarrow a(\C\ast_{\alpha}\mathcal{G})$ defined by $\psi(f_g\delta_g)=f_g$ for any $f_g\in a(\C)_g$ is an inverse of $\varphi$.
\end{proof}

Recall that a ring $A$ is called (left) $s$-unital, if for all $x\in A$ one has that $x\in Ax.$ We have the following.
\begin{prop}\label{asu} Let $\C$ be an $R$-semicategory such that $\C_0$ is finite and for each $u\in \C_0$ there is an ideal $\I(u)$ of $\C$ such that  $_u\I(u)_u$ contains a left local identity. Then $a(\C)$ is a left s-unital ring.
\end{prop}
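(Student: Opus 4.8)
The plan is to verify the definition of left $s$-unitality head-on: for an arbitrary $X\in a(\C)$ I want to exhibit a single $U\in a(\C)$ with $UX=X$, which gives $X\in a(\C)X$. First I would record the relevant structure. Since $\C_0$ is finite, $a(\C)=\bigoplus_{x,y\in\C_0}{_y\C_x}$ is a \emph{finite} direct sum, so every element can be written as a finite sum $X=\sum_{x,y\in\C_0}f_{yx}$ with $f_{yx}\in {_y\C_x}$, and the multiplication of $a(\C)$ is the matrix product induced by composition in $\C$. The key bookkeeping fact I will use is that, for $g\in {_b\C_a}$ and $f_{yx}\in {_y\C_x}$, the product $g\,f_{yx}$ vanishes unless $a=y$, so only matching ``middle indices'' contribute.

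Next I would assemble a candidate left unit from the given local identities. For each $u\in\C_0$ let $1_u\in {_u\I(u)_u}$ be the left local identity of the ideal $\I(u)$ supplied by the hypothesis, and set $U=\sum_{u\in\C_0}1_u$. Here the finiteness of $\C_0$ is exactly what guarantees that $U$ is a genuine element of the direct sum $a(\C)$ (for infinite $\C_0$ this sum would not lie in $\bigoplus$). Now I compute $UX=\sum_{u\in\C_0}\sum_{x,y\in\C_0}1_u\,f_{yx}$; by the vanishing of non-matching products noted above, only the terms with $u=y$ survive, so $UX=\sum_{x,y\in\C_0}1_y\,f_{yx}$, with each summand $1_y\,f_{yx}\in {_y\C_x}$.

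It then remains to establish the componentwise identity $1_y\,f_{yx}=f_{yx}$, and this is the step I expect to be the main obstacle. By the defining property of a left local identity, $1_y$ fixes on the left precisely the morphisms lying in the ideal $\I(y)$; since $\I(y)\unlhd\C$ and $1_y\in\I(y)$, absorption gives $1_y\,f_{yx}\in {_y\I(y)_x}$, but this yields only $1_y(1_y\,f_{yx})=1_y\,f_{yx}$ and not yet $1_y\,f_{yx}=f_{yx}$. The genuine difficulty is therefore to bridge from a left identity \emph{of the ideal} $\I(y)$ to a left identity on all of ${_y\C_x}$: concretely, I must show that every component $f_{yx}$ of an arbitrary $X$ is itself left-fixed by $1_y$, which is what the hypothesis on $\I(u)$ is intended to deliver (equivalently, that each morphism space out of $y$ is reached by the local identity of $\I(y)$). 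Once this is in place, $1_y\,f_{yx}=f_{yx}$ for every pair $(y,x)$, whence $UX=\sum_{x,y\in\C_0}f_{yx}=X$; as $X$ was arbitrary we obtain $X\in a(\C)X$ for all $X$, and moreover $U$ functions as a common left local unit, so $a(\C)$ is left $s$-unital. I would write out the absorption $1_y\,f_{yx}\in {_y\I(y)_x}$ and the invocation of the left-local-identity relation with particular care, since it is exactly at this junction that the hypothesis on the ideals $\I(u)$ and the finiteness of $\C_0$ are used together.
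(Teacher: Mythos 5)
Your construction and computation are precisely those of the paper's proof: the paper sets $e_y$ to be the left local identity of ${_y\I(y)_y}$, forms $e=\sum_{y\in\C_0}e_y$ (using finiteness of $\C_0$ exactly as you do, so that $e\in a(\C)$), and reduces $e\omega=\omega$ to the componentwise identities $e_yf_{y,x}=f_{y,x}$ together with $e_yf_{y',x}=0$ for $y'\neq y$, by the same index-matching observation. The step you leave open is therefore the entire content of the argument, and the paper does not close it by any further reasoning: it simply asserts ``by assumption'' that $e_yf_{y,x}=f_{y,x}$ for \emph{all} $f_{y,x}\in{_y\C_x}$, i.e.\ it reads the hypothesis as providing $e_u\in{_u\I(u)_u}$ which acts as a left identity on all of ${_u\C_x}$, and not merely on ${_u\I(u)_x}$ as Definition \ref{idmor} literally guarantees.

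You are right that under the strict reading of Definition \ref{idmor} the missing step cannot be established: absorption only yields $1_yf_{yx}\in{_y\I(y)_x}$ and the relation $1_y(1_yf_{yx})=1_yf_{yx}$, exactly as you observe, and in fact the proposition is \emph{false} under that reading. Indeed, the zero ideal $\I(u)=\{0\}$ is an ideal of any $\C$ and vacuously contains a left local identity; hence a one-object $R$-semicategory whose morphism module is nonzero but whose composition is identically zero satisfies the hypothesis, while $a(\C)$ is then a nonzero ring with zero multiplication, which is not left $s$-unital. So no amount of argument bridges the gap you identify; what is required is the stronger interpretation of the hypothesis that the paper's proof tacitly adopts, namely that the local identity $e_u$ fixes every morphism with target $u$ (this is also the form in which the proposition is later applied to the skew groupoid semicategory, where every morphism with target $u$ lies in an ideal on which the corresponding local identity acts). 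Once the hypothesis is taken in that form, your computation $UX=\sum_{x,y\in\C_0}1_yf_{yx}=\sum_{x,y\in\C_0}f_{yx}=X$ is complete and coincides verbatim with the paper's proof.
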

\begin{proof} Let $\omega\in a(\C)$ and  write $\omega=\sum\limits_{x,y\in \C_0}f_{y,x},$ for some $f_{y,x}\in\, _y\C_x.$ Take   $y\in \C_0,$ then by assumption there exists an ideal $_yI_y$ of $\C$ and  $e_y\in \,_yI_y$ such that $e_yf_{y,x}=f_{y,x},$ for all $x\in \C_0$ Thus in the ring  $a(\C)$ we have that 
\begin{equation}\label{sun}e_y\sum_{x\in \C_0}f_{y,x}=\sum_{x\in \C_0}f_{y,x},\,\,\,\text{and}\,\,\,e_y\sum_{x,y'\in \C_0\atop y\neq y'}f_{y',x}=0.
\end{equation}
Write $e=\sum_{y\in \C_0}e_y,$ it follows by \eqref{sun} that $e\omega=\omega,$ which implies that $a(\C)$ is a left $s$-unital ring.
\end{proof}

\begin{rem} 

Let  $\alpha_{a(\C)}=(a(\C)_g, \alpha_g)_{g\in {\rm mor}( \mathcal{G})}$ be the partial action of $\G$ on $a(\C)$ induced by $\alpha$ as in Proposition \ref{cattoalg},  it is clear that if $\alpha$ is global then so is $\alpha_{a(\C)}.$
\end{rem}

\begin{thm}\label{morequiv}
Let  $\C$ and  $\mathcal{T}$ be  $R$-categories,  if $\alpha=(\mathcal{I}^g, \alpha^g)_{g\in {\rm mor}(\G)}$  and $\beta=(\mathcal{J}^g, \beta^g)_{g\in {\rm mor}(\G)}$ are partial groupoid actions of $\mathcal{G}$ on $\C$ and $\mathcal{T},$ respectively, such that $(\mathcal{T}, \beta)$  is a globalization of $\alpha$. Then   $(\beta_{a(\mathcal{T})}, \mathcal{G}) $ is a globalization of $\alpha_{a(\C)}.$ In particular if $\alpha$ and $\beta$ are associative, then  $a(\C\star_{\alpha}\mathcal{G})$ and $a(\mathcal{T}\star_{\beta}\mathcal{G})$ are Morita equivalent. 
\end{thm}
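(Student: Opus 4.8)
The plan is to establish the two assertions of Theorem \ref{morequiv} separately, leveraging the functorial dictionary between $R$-semicategories and their associated $R$-algebras established in Proposition \ref{cattoalg}. First I would verify that $(\beta_{a(\mathcal{T})}, \mathcal{G})$ is a globalization of $\alpha_{a(\C)}$ in the sense of Definition \ref{defgloba}. By Proposition \ref{cattoalg}, the partial action $\alpha$ on $\C$ induces the partial action $\alpha_{a(\C)} = (a(\C)_g, \alpha_g)_{g\in {\rm mor}(\G)}$ on the ring $a(\C)$, and likewise $\beta$ on $\mathcal{T}$ induces the global action $\beta_{a(\mathcal{T})}$ on $a(\mathcal{T})$; globality is preserved by the remark following Proposition \ref{cattoalg}. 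The idea is to transport the data witnessing globalization at the semicategory level—the faithful semifunctors $\varphi_e\colon \mathcal{I}^e\to\mathcal{J}^e$ of Definition \ref{defn2}(ii)—into ring monomorphisms $\psi_e\colon a(\C)_e\to a(\mathcal{T})_e$ by setting $\psi_e = \bigoplus_{x,y}\varphi_e|_{{_y\mathcal{I}^e_x}}$ and extending by linearity. I would then check conditions (i)–(iv) of Definition \ref{defgloba} one at a time, each reducing to the corresponding condition (ii)–(v) of Definition \ref{defn2} applied componentwise across the direct sum over $\C_0\times\C_0$; the ideal property (i) follows from $\varphi_e(\mathcal{I}^e)\unlhd\mathcal{J}^e$, property (ii) from the compatibility (iii) of Definition \ref{defn2}, property (iii) from the intertwining (iv), and the density property (iv) from the spanning condition (v).

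Once $(\beta_{a(\mathcal{T})}, \mathcal{G})$ is known to be a globalization of $\alpha_{a(\C)}$ at the ring level, the second assertion follows by invoking the ring-theoretic Morita theory for partial skew groupoid rings. Under the associativity hypotheses on $\alpha$ and $\beta$, Proposition \ref{cattoalg} gives $R$-algebra isomorphisms $a(\C\star_\alpha\mathcal{G})\cong a(\C)\star_\alpha\mathcal{G}$ and $a(\mathcal{T}\star_\beta\mathcal{G})\cong a(\mathcal{T})\star_\beta\mathcal{G}$. Thus it suffices to exhibit a Morita context between $a(\C)\star_\alpha\mathcal{G}$ and $a(\mathcal{T})\star_\beta\mathcal{G}$. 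This is precisely the situation of the classical result relating the partial skew groupoid ring of a partial action to the skew groupoid ring of its enveloping action: whenever $\beta_{a(\mathcal{T})}$ globalizes $\alpha_{a(\C)}$ and the relevant rings are $s$-unital, the partial skew groupoid ring $a(\C)\star_\alpha\mathcal{G}$ is Morita equivalent to $a(\mathcal{T})\star_{\beta_{a(\mathcal{T})}}\mathcal{G}$. I would cite the appropriate theorem from \cite{BP} (or \cite{BFP}) and verify its hypotheses, checking in particular that $s$-unitality holds; Proposition \ref{asu} supplies exactly this, since the local identities in $_x\mathcal{I}^e_x$ guaranteed by the category structure of $\C$ make $a(\C)$ (left) $s$-unital, and analogously for $a(\mathcal{T})$.

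The main obstacle I anticipate lies in the bookkeeping required to confirm that the semicategory-level globalization data genuinely assemble into ring-level globalization data satisfying every clause of Definition \ref{defgloba}, particularly clause (iv) where the sum $\sum_{r(h)=r(g)}$ ranges over morphisms and must be shown to produce exactly $a(\mathcal{T})_g=\bigoplus_{x,y}{_y\mathcal{J}^g_x}$ after passing to the direct sum. The subtlety is that the spanning identity (v) of Definition \ref{defn2} is stated fiberwise in ${_y\mathcal{J}^g_x}$, and one must ensure that the direct-sum decomposition of $a(\mathcal{T})$ is compatible with the $\beta_h\circ\varphi_{d(h)}$ maps, i.e.\ that summing over objects does not disturb the globalization relations. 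A secondary difficulty is matching the precise $s$-unitality or idempotent hypotheses required by the Morita theorem one cites from the ring-theoretic literature; one must confirm these transfer correctly, which is where the hypothesis that $\C$ and $\mathcal{T}$ are genuine $R$-\emph{categories} (rather than mere semicategories) becomes essential, since the presence of local identities is what drives Proposition \ref{asu} and hence the $s$-unitality needed to invoke Morita equivalence.
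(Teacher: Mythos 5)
Your proposal follows essentially the same route as the paper: extend the semifunctors $\varphi_e$ componentwise to ring monomorphisms $a(\C)_e\to a(\mathcal{T})_e$, reduce each clause of Definition \ref{defgloba} to the corresponding clause of Definition \ref{defn2}, and then combine Proposition \ref{cattoalg} with the $s$-unitality supplied by Proposition \ref{asu} to invoke a ring-theoretic Morita theorem for partial skew groupoid rings. The one correction: the reference that actually applies in this $s$-unital (nonunital) setting is \cite[Theorem 4.5]{BPi}, which is what the paper cites, rather than \cite{BP} or \cite{BFP}, whose Morita results assume unital ideals --- precisely the hypothesis-matching subtlety you flagged at the end.
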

\begin{proof} For each $g\in {\rm mor}( \mathcal{G})$ let $a(\C)_g=\bigoplus\limits_{x,y\in \C_0}{_y\mathcal{I}^g_{x}},$  In particular, for $e\in \G_0$ we get $a(\C)_e=\bigoplus\limits_{x,y\in \C_0}{_y\mathcal{I}^e_{x}},$ now by (ii) of Definition \ref{defn2} there exists a faithful semifunctor $\varphi_e:\mathcal{I}^e\rightarrow \mathcal{J}^e,$ which by the proof of Theorem \ref{teogloba} can be considered as the inclusion  $\varphi_e:\mathcal{I}_0^e\rightarrow \mathcal{J}_0^e,$  then $\varphi_e:\bigoplus\limits_{x,y\in \C_0}{_y\mathcal{I}^g_{x}}\to \bigoplus\limits_{x,y\in \C_0}{_{y}\mathcal{J}^g_{
x}}$ is a ring monomorphism, and thus  $\varphi_e:\bigoplus\limits_{x,y\in \C_0}{_y\mathcal{I}^g_{x}}\to \bigoplus\limits_{x,y\in \mathcal{T}_0}{_y\mathcal{J}^g_{x}}$ is a ring monomorphism. Now it is clear that items (i)-(iv) in Definition \ref{defgloba} follow from itens (iii)-(vi) in Definition \ref{defn2}. The last assertion follows from Proposition \ref{asu}, Proposition \ref{cattoalg} and \cite[Theorem 4.5]{BPi}.
\end{proof}

\section{The smash product semicategory} 
We start this section by giving the construction of a quotient $R$-semicategory.

Let $\alpha=(\I^g,\beta^g)_{g\in {\rm mor}( \mathcal{G})}$ be a partial action on an $R$-semicategory $\C,$ and let $\alpha^0$ be the corresponding partial action on $\C_0,$  write $ge=\af_g^0(e),$ for each $e\in \C_0^{g^{-1}}.$ We say that   $\C$ is a free $\G$-semicategory if  
\begin{equation}\label{gfree} \text{For any}\,\, e\in \C_0^{g^{-1}}\cap \C_0^{h^{-1}}\,\,\, ge=he\,\,\, \text{implies}\,\,\,\,  g=h.\end{equation} 
\begin{rem}
The relation  $e\sim f,$ if and only if,  exists $g\in {\rm mor}( \mathcal{G})$ such that $e \in \C_0^{g^{-1}}$ and $gx=y.$ is not an equivalence relation in $\C_0.$ Indeed it is reflexive and symmetric  but  not necessarily transitive. Let $\cong$ be the transitive clausure of $\sim,$ then $e\cong f$ if and only if there are $n\in \Z^+,$ $g_1,\cdots, g_n\in {\rm mor}( \mathcal{G})$ such that $$e\in \C_0^{g_1^{-1}}, g_1e\in \C_0^{g_2^{-1}}, \cdots, g_{n-1}(\cdots ( g_2 (g_1 e))\cdots)\in \C_0^{g_n^{-1}}$$ and 
$$g_n(g_{n-1}( g_2 (g_1 e))\cdots))=f.$$  
In the case that $\G$ is a group or that $\alpha^0$ is given by multiplication (see Example \ref{exe3}), then $\sim$ is an equivalence relation, which is known as  the orbit equivalence  relation  determined by $\af.$  
\end{rem}
 Now consider the $R$-category $\mathcal{C}/\G$ whose set of  objects are the equivalence classes determined by $\cong.$  To define the morphisms of $\mathcal{C}/\G$   
Let  $\tau , \rho\in \mathcal{C}_0/\cong$ and consider   the external direct sum 
\begin{equation}\label{cmor}C(\rho,\tau)=\bigoplus\limits_{x\in \rho, y\in \tau}{_y}\C_x,\end{equation}
Then there is an $R$-bilinear  map $C(\rho,\tau)\times C(\tau, \kappa)\to C(\rho, \kappa)$ provided with the matrix product induced by the composition in $\C.$  
We  have the following. 

\begin{prop}\label{orbita} Let    $\C$  be  a free $\G$-semicategory and $\tau , \rho\in \mathcal{C}_0/\cong.$ Then the  family  $\alpha=(C(\rho,\tau)_g,\af_g)_{g\in {\rm mor}( \mathcal{G})},$ where $C(\rho,\tau)_g=\bigoplus\limits_{x\in \rho, y\in \tau}{_y}\I^g_x $ 
$$\af_g\colon C(\rho,\tau)_{g^{-1}}\ni\sum\limits_{x\in \tau , y\in \rho}{_y}f_x \mapsto  \sum\limits_{x\in \tau , y\in \rho}{_{gy}}\af_g(f)_{gx} \in  C(\rho,\tau)_{g},$$
 for each $g\in {\rm mor}( \mathcal{G})$ is a partial action on $\bigcup\limits_{x\in \G_0}C(\rho,\tau)_x.$ Moreover, if $\cong_1$ is the equivalence relation determined by the orbit relation induced by $\af,$ then $\cong_1$ is a congruence in $\bigcup\limits_{x\in \G_0}C(\rho,\tau)_x.$ 
\end{prop}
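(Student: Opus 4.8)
The plan is to treat both assertions as consequences of the fact that $\alpha=(\I^g,\alpha^g)_{g\in{\rm mor}(\G)}$ is already a partial action on $\C$ in the sense of Definition \ref{apgc}, together with the $\G$-invariance of the $\cong$-classes. First I would record the preliminary observation that if $x\in\rho$ (resp. $y\in\tau$) and $x\in\C_0^{g^{-1}}$, then $x\sim gx$, hence $gx\cong x$ and so $gx\in\rho$ (resp. $gy\in\tau$); this is exactly what the definition of the transitive closure $\cong$ gives. Consequently $\af_g$ carries the component ${_y}\I^{g^{-1}}_x$ with $x\in\rho$, $y\in\tau$ into ${_{gy}}\I^g_{gx}$ with $gx\in\rho$, $gy\in\tau$, i.e. into $C(\rho,\tau)_g$, so each $\af_g\colon C(\rho,\tau)_{g^{-1}}\to C(\rho,\tau)_g$ is well defined; being a direct sum of the $R$-linear bijections $\alpha^g$ it is itself an $R$-linear bijection.

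With well-definedness in hand, verifying that $(C(\rho,\tau)_g,\af_g)_{g\in{\rm mor}(\G)}$ is a partial action reduces to checking the axioms of Definition \ref{ny} componentwise. The inclusion $C(\rho,\tau)_g\subseteq C(\rho,\tau)_{r(g)}$ is immediate from $\I^g\unlhd\I^{r(g)}$; the decomposition $\bigcup_{e\in\G_0}C(\rho,\tau)_e$ together with $\af_e={\rm id}$ follows from $\alpha^e={\rm id}_{\I^e}$; axiom (ii), namely $\af_g(C(\rho,\tau)_{g^{-1}}\cap C(\rho,\tau)_h)=C(\rho,\tau)_g\cap C(\rho,\tau)_{gh}$, is the direct sum over $x\in\rho$, $y\in\tau$ of the equalities $\alpha^g({_y}\I^{g^{-1}}_x\cap{_y}\I^h_x)={_y}\I^g_x\cap{_y}\I^{gh}_x$ furnished by Lemma \ref{global}(iii); and axiom (iii), $\af_g\af_h=\af_{gh}$ on the appropriate domain, is the componentwise form of Definition \ref{apgc}(vi). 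This settles the first assertion.

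For the second assertion I would work with the structure that $X:=\bigcup_{e\in\G_0}C(\rho,\tau)_e$ inherits as a subset of the $R$-module $C(\rho,\tau)$ of \eqref{cmor}, and show that the transitive closure $\cong_1$ of the orbit relation $\sim_{\af}$ is compatible with the operations. Since each $\af_g$ is $R$-linear and each $C(\rho,\tau)_{g^{-1}}$ is an $R$-submodule, the single-step relation respects scalars and respects sums of elements moved by one and the same $g$: if $w=\af_g(u)$ then $rw=\af_g(ru)$, and if $w=\af_g(u)$, $v=\af_g(u')$ then $w+v=\af_g(u+u')$. Passing to the transitive closure preserves these compatibilities, and the freeness hypothesis \eqref{gfree} is what guarantees that identifications made along different chains are consistent: on a nonzero homogeneous component $gx=hx$ forces $g=h$, so two steps landing on the same objects must use the same groupoid element and hence the same linear map. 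I would then conclude that $\cong_1$ is a congruence, so that the quotient $X/\cong_1$ is a well-defined $R$-module, namely the hom-module of $\C/\G$ from $\rho$ to $\tau$.

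The routine part is the first assertion, which is essentially bookkeeping once class-invariance is noted. The main obstacle is the congruence claim: the orbit relation is \emph{not} transitive (as observed in the Remark preceding the statement), so one cannot argue compatibility one step at a time in general, and the additivity of $\cong_1$ when $w\cong_1 w'$ and $v\cong_1 v'$ are witnessed by different chains of groupoid elements has to be organized carefully. Freeness \eqref{gfree} is the structural input that prevents distinct groupoid elements from producing conflicting identifications of the same homogeneous data, and checking that it indeed forces the chains to be compatible is where the real work lies.
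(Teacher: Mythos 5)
Your verification of the first assertion is fine (indeed more detailed than the paper, which simply declares this part clear): class-invariance of $\rho$ and $\tau$ under the object-level action gives well-definedness of $\af_g$, and the axioms then follow componentwise from Lemma \ref{global} and Definition \ref{apgc}. The problem is the second assertion, where you have proved the wrong compatibility. You interpret ``congruence'' as compatibility of $\cong_1$ with the $R$-module operations inside the single hom-module $\bigcup_{e\in\G_0}C(\rho,\tau)_e$, concluding that the quotient is an $R$-module. But the congruence property the proposition needs --- and the one the paper actually proves --- is compatibility with \emph{composition across hom-sets}: if $\f_1\cong_1\f_2$ in $C(\rho,\tau)$ and $\f_3\cong_1\f_4$ in $C(\tau,\kappa)$, then $\f_4\f_2\cong_1\f_3\f_1$ in $C(\rho,\kappa)$. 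This is exactly what is required for the quotient semicategory $\C/\G$, defined immediately after the proposition, to have a well-defined composition; without it the proposition accomplishes nothing toward that construction. Your proposal never formulates this cross-hom-set statement, so it has a genuine gap at the heart of the result.

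Note also how freeness enters in the paper's argument for that claim, which is different from the role you assign it. One takes witnessing chains $g_1,\dots,g_n$ for $\f_1\cong_1\f_2$ and $h_1,\dots,h_m$ for $\f_3\cong_1\f_4$, uses (PA3) to assume w.l.o.g.\ $n=m$, and then expands the product $\f_4\f_2$ into homogeneous components: a cross term $\af_{(h_n,\dots,h_1)}({_w{f_3}_z})\,\af_{(g_n,\dots,g_1)}({_y{f_1}_x})$ can only be nonzero when the object-level actions of the two chains match on $\tau$, and the freeness condition \eqref{gfree} then forces the $h$-chain to equal the $g$-chain on those components. This lets one rewrite $\f_4\f_2$ as $\af_{g_n}(\cdots\af_{g_1}(\f_3\f_1)\cdots)$, i.e.\ as the image of $\f_3\f_1$ under a single chain, giving $\f_4\f_2\cong_1\f_3\f_1$. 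By contrast, your use of freeness (to make additive identifications along different chains ``consistent'') does not rescue your additivity claim either: if $w\cong_1 w'$ and $v\cong_1 v'$ are witnessed by genuinely different chains, no single chain need carry $w+v$ to $w'+v'$, so even the module-level compatibility you assert is not established by the argument you sketch.
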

\begin{proof} It is clear that $\af=(C(\rho,\tau)_g,\af_g)_{g\in {\rm mor}( \mathcal{G})}$ is a partial action of $\G$ on $\bigcup\limits_{x\in \G_0}C(\rho,\tau)_x.$ To check that $\cong_1$ is a congruence in $\bigcup\limits_{x\in \G_0}C(\rho,\tau)_x,$ consider morphisms $\f_1, \f_2, \f_3, \f_4$ in $\bigcup\limits_{x\in \G_0}C(\rho,\tau)_x,$ such that $\f_1\cong_1 \f_2$ in $C(\rho,\tau)$ and $\f_3\cong_1 \f_4$ in $C(\tau, \kappa).$ Then there are $n\in \Z^+,$ $g_1,\dots, g_n\in {\rm mor}( \mathcal{G})$ such that $\f_1\in C(\rho,\tau)_{{g_1}^{-1}}, \af_{g_1}(\f_1)\in C(\rho,\tau)_{g_2^{-1}}, \dots,  \af_{g_{n-1}}(\cdots (  \af_{g_2} ( \af_{g_1} (\f_1)))\cdots)\in C(\rho,\tau)_{{g_n}^{-1}}$ with 
\begin{equation}\label{afgn}\af_{g_n}(\af_{g_{n-1}}(\cdots (  \af_{g_2} ( \af_{g_1} (\f_1)))\cdots)=\f_2,\end{equation}
 and  $m\in \Z^+,$ $h_1,\dots, h_m\in {\rm mor}( \mathcal{G})$ such that $\f_3\in C(\tau, \kappa)_{{h_1}^{-1}},\af_{h_1}(\f_3)\in C(\tau, \kappa)_{h_2^{-1}}, \dots,  \af_{h_{m-1}}(\cdots (  \af_{h_2} ( \af_{h_1} (\f_3)))\cdots)\in C(\tau, \kappa)_{{h_m}^{-1}}$ and 
\begin{equation}\label{afhn}\af_{h_m}(\af_{h_{m-1}}(\cdots (  \af_{h_2} ( \af_{h_1} (\f_3)))\cdots))=\f_4.\end{equation}  

Now we use $\af_{(l_n,l_{n-1},\dots ,l_1)}(_u{r}_t)$ to denote  $
\af_{l_n}(\af_{l_{n-1}}(\cdots (  \af_{l_2} ( \af_{l_1} (_u{r}_t)))\cdots),$ where  $l_1,l_2,\dots ,l_n$ are morphisms in $\G,$ $u,t\in \C_0$ and $_u{r}_t\in {_u{\C}_t}$ such that $
\af_{l_n}(\af_{l_{n-1}}(\cdots (  \af_{l_2} ( \af_{l_1} (_u{r}_t)))\cdots),$ is well defined.
 
By (PA3) in Definition \ref{NYG} one may suppose w.l.o.g that $n$ in \eqref{afgn} coincides with  $m$ in \eqref{afhn}, then
\begin{align*}\f_4\f_2&=\af_{h_m}(\af_{h_{m-1}}(\cdots (  \af_{h_2} ( \af_{h_1} (\f_3)))\cdots))\af_{g_n}(\af_{g_{n-1}}(\cdots (  \af_{g_2} ( \af_{g_1} (\f_1)))\cdots)\\
&=\sum_{z\in \tau, w\in \kappa}\af_{(h_m,h_{m-1},\dots ,h_1)}(_w{f_3}_z)\sum_{x\in \rho, y\in \tau}\af_{(g_n,g_{n-1},\dots ,g_1)}(_y{f_1}_x)
\\
&\stackrel{\eqref{gfree}}=\sum_{y\in \tau, w\in \kappa, x\in \rho}\af_{(g_n,g_{n-1},\dots ,g_1)}(_w{f_3}_z)\af_{(g_n,g_{n-1},\dots ,g_1)}(_y{f_1}_x)
\\
&=\sum_{ w\in \kappa, x\in \rho}\af_{(g_n,g_{n-1},\cdots ,g_1)} (_w(f_3f_1)_x)\\
&=\af_{g_n}(\af_{g_{n-1}}(\dots (  \af_{g_2} ( \af_{g_1} (\f_3\f_1)))\cdots),
\end{align*}
from this we conclude that $\f_4\f_2\cong_1 \f_3\f_1$ as desired.
\end{proof}
\bigskip
Now the quotient semicategory is the semicategory $\C/\G$  whose set of objects are the $\cong$-equivalence classes   and  $_\tau\C/\G_\rho=\left(\bigcup\limits_{x\in \G_0}C(\rho,\tau)_x\right)/\cong_1,$ for all $\rho,\tau \in(\C/\G)_0.$ We say that  $\C$ is a Galois
covering of the quotient $\C/\G.$

%
\bigskip

In order to present the smash product category, we first recall that  $R$-algebra $S$  is said to be  graded by a groupoid $\G$ is 
 there is a set $\{ S_g \}_{g \in {\rm mor (\G)}}$ of $R$-submodules of
$S$ such that $S = \bigoplus\limits_{g \in  {\rm mor (\G)}} S_g$ and for all $g,h \in {\rm mor (\G)}$ 
$S_g S_h \subseteq S_{gh}$, if $(g,h) \in \G^2$ and $S_g S_h = \{ 0 \}$, otherwise. This and \cite[Section 3]{CM} motivates the following.

\begin{defn}
Let $\mathcal{G}$ be a groupoid, a $\mathcal{G}$-graded $R$-semicategory is an $R$-semicategory $\mathcal{B}$ 
together with a decomposition of each $R$-module morphism ${_y\mathcal{B}_x}=\bigoplus\limits_{g\in {\rm mor}(\mathcal{G})}{_y\mathcal{B}_x}^g$ such that, for each $x,y,z\in \B_0$  one has that
 ${_z\mathcal{B}_y}^t{_y\mathcal{B}_x}^s\subseteq {_z\B_x}^{ts}$ if $d(t)=r(s),$ and ${_z\mathcal{B}_y}^t{_y\mathcal{B}_x}^s=\{0\},$ otherwise.\end{defn}

Given a $\G$-graded semicategory $\B,$  we may construct the $\G$-graded semicategory $\B\otimes \G,$ where $(\B\otimes \G)_{0}=\B_0\times \G_0$ and $_{(y,f)}(\B\otimes \G)_{(x,e)}=\bigoplus_{g\in _f\mathcal{G}_e}{_y\mathcal{B}_x}^g,$ where $_f\mathcal{G}_e$ denotes the set of morphisms $g$ of $\G$ with $d(g)=e$ and $r(g)=f.$
\smallskip

Following \cite[Definition 3.1]{CM} we give the next.
\begin{defn}
Let $\mathcal{G}$ be a groupoid, $\mathcal{B}$ a $\mathcal{G}$-graded $R$-semicategory 
The smash product $R$-semicategory $\B \# \G$ has object set $\B_0\times {\rm mor}(\G),$ and if $(x,s)$ and $(y,t)$ are objects, then the $R$-module morphism $_{(y,t)}(\B \# \G)_{(x,s)}$ is defined as follows:
\begin{equation}\label{smash}_{(y,t)}(\B \# \G)_{(x,s)}=
  \begin{cases}
  {_y\B_x}^{t^{-1}s} & \text{if $r(t)=r(s)$},\\
  0, & \text{otherwise.}
  \end{cases}\end{equation}

In order to define the composition map
$$_{(z,u)}(\B \# \G)_{(y,t)}\otimes _{(y,t)}(\B \# \G)_{(x,s)}\to \,_{(z,u)}(\B \# \G)_{(x,s)} $$
note that if $(u^{-1},t), (t^{-1},s)\in \mathcal{G}^2,$ then $(u^{-1},s)\in \mathcal{G}^2$ and the left hand side is
$ {_z\B_y}^{u^{-1}t}\otimes  {_y\B_x}^{t^{-1}s}$
while the right hand side is $ {_z\B_x}^{u^{-1}s}$.
Then, the graded composition of $\B$ provides the
required map.\end{defn}
\begin{rem} Let $u,t,s\in \G$  besuch that $(u,t), (u,s)\in \mathcal{G}^2.$ Then $(t^{-1},s)$ and $ ((ut)^{-1},us)$ belong to $ \mathcal{G}^2,$ 
and 
  $(ut)^{-1}us= t^{-1}s$,  and  $_{(y,t)}(\B \# \G)_{(x,s)},$ From this one concludes that  $_{(y,ut)}(\B \# \G)_{(x,us)}$ are $R$-modules morphism from different objects which coincide as $R$-modules.
\end{rem}
The following result provide us sufficient conditions for which $\B\otimes\G$ and $\B \# \G$ to be  $R$-categories.
\begin{lem}\label{cat} Suppose that $\B$ is a $\G$-graded $R$-category and that $\G_0$ is finite, then   the $R$-semicategories $\B\otimes\G$ and $\B \# \G$ are $R$-categories.
\end{lem}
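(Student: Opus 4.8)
The plan is to exhibit an identity morphism at every object of $\B\#\G$ and of $\B\otimes\G$, manufactured out of the identities of the underlying $R$-category $\B$; since both constructions are already known to be $R$-semicategories, with composition inherited from the graded composition of $\B$, producing identities is all that remains. For $x\in\B_0$ let $1_x\in{_x\B_x}$ denote the identity of $\B$ at $x$ and write $1_x=\sum_{g\in{\rm mor}(\G)}(1_x)^g$ for its decomposition along the grading, with $(1_x)^g\in{_x\B_x}^g$; as this lives in a direct sum only finitely many components are nonzero. The standing hypothesis that $\G_0$ is finite is used to ensure that the regroupings of the form $\sum_{e\in\G_0}(1_x)^e$ appearing below are finite, hence are genuine morphisms of $\B$.

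The first step is the structural observation that $1_x$ is concentrated on the identities of $\G$, that is $1_x=\sum_{e\in\G_0}(1_x)^e$. To prove it I would set $u=\sum_{e\in\G_0}(1_x)^e$ and show $u$ is a left identity of ${_x\B_x}$: for a homogeneous $a\in{_x\B_x}^h$ the grading rule ${_x\B_x}^g\,{_x\B_x}^h\subseteq{_x\B_x}^{gh}$, which vanishes unless $d(g)=r(h)$, shows that in $1_x a=a$ only the summand $g=r(h)$ contributes to grade $h$, forcing $(1_x)^{r(h)}a=a$; since $r(h)\in\G_0$ this gives $ua=a$. As $1_x$ is a two-sided identity, $u=u\,1_x=1_x$, proving the claim. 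With this in hand the identities are the obvious homogeneous components. For $\B\#\G$ I would take the identity at $(x,s)$ to be $(1_x)^{d(s)}$, which lies in $_{(x,s)}(\B\#\G)_{(x,s)}={_x\B_x}^{s^{-1}s}$; to check it is a right unit, take $f\in{_{(y,t)}(\B\#\G)_{(x,s)}}={_y\B_x}^{t^{-1}s}$ and read off the grade-$(t^{-1}s)$ part of $f\,1_x=f$, observing that $(t^{-1}s)g=t^{-1}s$ with $d(t^{-1}s)=r(g)$ forces $g=d(s)$, so that $f\,(1_x)^{d(s)}=f$; the left-unit law is symmetric, using $1_x f=f$. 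The same recipe handles $\B\otimes\G$: the identity at $(x,e)$ for $e\in\G_0$ is $(1_x)^e\in\bigoplus_{g\in\G_e}{_x\B_x}^g$, and the unit laws follow by the identical component-extraction argument, now using $ge=g$ and $eg=g$ for $g\in\G_e$.

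The point I expect to require the most care is the bookkeeping with the partially defined groupoid multiplication: at each stage one must verify that the relevant products $gg'$ are defined, i.e. that the domain/range matching $d(g)=r(g')$ holds, and then isolate the unique homogeneous component contributing to the prescribed grade. Once this groupoid arithmetic is organized cleanly, the verification that each candidate is a two-sided identity is routine, and the conclusion that $\B\otimes\G$ and $\B\#\G$ are $R$-categories follows.
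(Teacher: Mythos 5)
Your proof is correct and follows essentially the same approach as the paper: both manufacture the identity at $(x,s)$ in $\B\#\G$ as the $d(s)$-homogeneous component of the identity $1_{{}_x\B_x}$ of $\B$ (and the $e$-component at $(x,e)$ in $\B\otimes\G$), then verify the unit laws by isolating graded components of $f\,1_{{}_x\B_x}=f$ using the groupoid compatibility of the grading. Your preliminary step showing that $1_{{}_x\B_x}$ is supported on the $\G_0$-graded components is a fact the paper's proof uses without justification (it simply writes the decomposition as a sum over $\G_0$), so your argument is, if anything, slightly more complete.
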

\begin{proof} Let $(x,e)\in (\B\otimes\G)_0$ and $(x,s)\in (\B \# \G)_{0},$  then $x\in \B_0$ and if $1_{_x\B_x}$ is the identity morphism of $x,$ then it   is the identity element of the $\G$-graded $R$-algebra  $_x\B_x.$ 
Let $1_{_x\B_x}=\sum_{u\in \G_0}{1_{_x\B_x}}^ u$  be the  homogeneous decomposition of  $1_{_x\B_x}$ then  the morphisms ${1_{_x\B_x}}^ {e}$ and ${1_{_x\B_x}}^ {d(s)}$ are  the identity morphism of $(x,e)$ and $(x,s),$ respectively.  Indeed, let  $(y,f)\in  (\B\otimes\G)_0$ and $h=\sum_{g\in _f\mathcal{G}_e}{_yh_x}^g
$ be a morphism in $_{(y,f)}(\B\otimes \G)_{(x,e)}$ then $h\in\, _{y}\B_{x}$ and $h=h1_{_x\B_x}.$ 
Now  for $u\in \G_0, u\neq e$ 
$$h{1_{_x\B_x}}^ u=\sum_{g\in _f\mathcal{G}_e}{_yh_x}^g{1_{_x\B_x}}^ u\in \sum_{g\in _f\mathcal{G}_e}{_y\mathcal{B}_x}^g{_x\mathcal{B}_x}^u=\{0\},$$ then 
$$h=h1_{_x\B_x}=\sum_{u\in \G_0}h{1_{_x\B_x}}^ u=h{1_{_x\B_x}}^ e,$$ as desired. In a similar way we show that ${1_{_x\B_x}}^ e\,l$ for all $l\in \,_{(x,e)}(\B\otimes \G)_{(y,f)}.$

Now we check that ${1_{_x\B_x}}^ {d(s)}$ is the identity morphism of $(x,s).$ Let $(y,t)\in  (\B \# \G)_{0},$  $g\in\, _{(y,t)}(\B \# \G)_{(x,s)}$ and suppose that $g\neq 0$ then $r(t)=r(s)$ and $g\in{_y\B_x}^{t^{-1}s}.$ We know that $g=g1_{_x\B_x},$  and if $e\in \G_0 $ is different from $d(s)$ we have that 
$g{1_{_x\B_x}}^ {e}\in {_y\B_x}^{t^{-1}s}{_x\B_x}^ {e}=\{0\},$ thus $g=g1_{_x\B_x}=g{1_{_x\B_x}}^ {d(s)},$ analogously one can show that $1_{_x\B_x}^ {d(s)}h=h,$ for all $h\in\, _{(x,s)}(\B \# \G)_{(y,t)}$.
\end{proof}

\begin{thm}\label{Galois}
Let $\B$ be a $\G$-graded category over $R$. Then  there is a global action $\alpha$ on $\B\#\G$ making $\B\#\G$ a free $\G$-category and a Galois covering of $\B\otimes \G.$
\end{thm}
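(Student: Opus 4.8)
The plan is to exhibit an explicit global action $\alpha$ of $\G$ on $\B\#\G$ modelled on the regular action of $\G$ on itself (Example \ref{exe3}), to verify freeness by cancellation in the groupoid, and finally to identify the quotient $(\B\#\G)/\G$ with $\B\otimes\G$. On objects I would set $\alpha_0^u(x,s)=(x,us)$, defined exactly when $d(u)=r(s)$; thus $\C_0^{u^{-1}}=\B_0\times\G(-,d(u))$ and $\C_0^u=\B_0\times\G(-,r(u))$. This is precisely the regular action of Example \ref{exe3} on the second coordinate (trivial on $\B_0$), so it is a global action on objects and in particular $\C_0^u=\C_0^{r(u)}$. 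I would then take $\mathcal{I}^u=\mathcal{I}^{r(u)}$ to be the full subsemicategory of $\B\#\G$ supported on $\C_0^u$. This is an ideal of $\B\#\G$ because a nonzero morphism $(y,t)\to(z,v)$ forces $r(t)=r(v)$ by \eqref{smash}, so the support set $\{(\cdot,s):r(s)=r(u)\}$ is closed under pre- and post-composition.

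On morphisms I let $\alpha^u$ be the identity map of the underlying $R$-module, shifting only the object labels,
\[
\alpha^u\colon {_{(y,t)}(\B\#\G)_{(x,s)}}={_y\B_x}^{t^{-1}s}\ \longrightarrow\ {_{(y,ut)}(\B\#\G)_{(x,us)}}={_y\B_x}^{(ut)^{-1}(us)},
\]
which is meaningful exactly because $(ut)^{-1}(us)=t^{-1}s$, by the Remark preceding Lemma \ref{cat}. Since the graded product in $\B$ is unchanged under relabelling of objects, a short computation shows $\alpha^u$ is a semifunctor satisfying $\alpha^g\alpha^h=\alpha^{gh}$ and $\alpha_0^g\alpha_0^h=\alpha_0^{gh}$ on $\G^2$; combined with $\mathcal{I}^u=\mathcal{I}^{r(u)}$ and $\C_0^u=\C_0^{r(u)}$, Lemma \ref{global}(i) then yields that $\alpha$ is a global action.

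Freeness is immediate: if $(x,s)\in\C_0^{g^{-1}}\cap\C_0^{h^{-1}}$ and $g(x,s)=h(x,s)$, then $gs=hs$ with $d(g)=d(h)=r(s)$, and right-multiplying by $s^{-1}$ gives $g=g\,r(s)=h\,r(s)=h$, so \eqref{gfree} holds and $\B\#\G$ is a free $\G$-category (a category by Lemma \ref{cat} when $\G_0$ is finite). For the quotient, since the object action is the regular one, $\sim$ is already an equivalence relation (as observed in the discussion preceding Proposition \ref{orbita}), so $\cong$ coincides with it; the orbit of $(x,s)$ is $\{(x,w):d(w)=d(s)\}$, whence $[(x,s)]\mapsto(x,d(s))$ identifies $(\B\#\G)_0/\cong$ with $\B_0\times\G_0=(\B\otimes\G)_0$. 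Fixing representatives $\rho=[(x,e)]$, $\tau=[(y,f)]$ with $e,f\in\G_0$ and unwinding Proposition \ref{orbita}, a homogeneous morphism $\xi\in{_{(y,q)}(\B\#\G)_{(x,p)}}={_y\B_x}^{q^{-1}p}$ (with $d(p)=e$, $d(q)=f$) is $\cong_1$-related to $\xi'\in{_y\B_x}^{q'^{-1}p'}$ iff $q^{-1}p=q'^{-1}p'$ and $\xi=\xi'$, because $\alpha^g$ preserves the degree $q^{-1}p$ and fixes underlying elements. Hence $[\xi]\mapsto\xi$ induces an $R$-isomorphism
\[
{_\tau\big((\B\#\G)/\G\big)_\rho}\ \xrightarrow{\ \cong\ }\ \bigoplus_{g\in{_f\G_e}}{_y\B_x}^{g}={_{(y,f)}(\B\otimes\G)_{(x,e)}}.
\]

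Finally I would check that this bijection is a functor: aligning middle representatives to a common $(y,q)$ and computing the composite in $\B\#\G$ as the graded product $(r^{-1}q)(q^{-1}p)=r^{-1}p$ shows it carries the composition of $(\B\#\G)/\G$ exactly to the graded composition of $\B\otimes\G$, so it is an isomorphism of $R$-categories, exhibiting $\B\#\G$ as a Galois covering of $\B\otimes\G$. The main obstacle will be this last step: correctly unwinding the congruence $\cong_1$ of Proposition \ref{orbita} to see that two homogeneous morphisms are identified precisely when they share a degree $g\in{_f\G_e}$ and agree as elements of ${_y\B_x}^g$, and then matching the induced composition with the graded multiplication of $\B\otimes\G$. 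The verifications that $\alpha$ is a well-defined global action and that it is free are comparatively routine once the relabelling identity $(ut)^{-1}(us)=t^{-1}s$ is in hand.
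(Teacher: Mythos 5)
Your proposal is correct and follows essentially the same route as the paper: the same regular-type global action $\alpha^u(x,s)=(x,us)$ with $\alpha^u$ acting as the identity on underlying graded components via $(ut)^{-1}(us)=t^{-1}s$, the same cancellation argument for freeness, and the same identification of orbits with $\B_0\times\G_0$ and of quotient morphism spaces with $\bigoplus_{g\in{_f\G_e}}{_y\B_x}^g$. Your unwinding of the congruence $\cong_1$ (two homogeneous morphisms are identified iff they have the same degree and the same underlying element) is just a pointwise restatement of the paper's reparametrization $l=v^{-1}u$ followed by normalization along the action, so the two proofs coincide in substance.
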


\begin{proof}
 We define  a global  action $\alpha$ on $\B \# \G$ as follows. In  the set of objects $\B_0\times {\rm mor}(\G),$  take $g\in \G$ and define
 $$(\B_0\times {\rm mor}(\G))_g=\B_0\times \G(-, r(g))\,\,\text{ and}\,\, \alpha^0_g(x,s)=(x,gs),$$ 
for any $(x,s)\in (\B_0\times \G)_{g^{-1}}$. Then $\alpha^0=((\B_0\times {\rm mor}(\G))_g, \alpha^0_g)_{g\in {\rm mor}(\G)}$ is a global action of $\G$ on $\B_0\times \G.$  Now we define the action in the level of morphisms, for  $(x,s), (y,t)\in \B_0\times {\rm mor}(\G)$  and $g\in {\rm mor}(\G)$ let \begin{equation}\label{Ig}_{(y,t)}\I^g_{(x,s)}=
  \begin{cases}
  {_y\B_x}^{t^{-1}s} & \text{if $r(t)=r(s)= r(g)$ and},\\
  0, & \text{otherwise.}
  \end{cases}\end{equation} 
and we set 
$$\af^g(f)=f\in \,_{(y,gt')}\I^{g}_{(x,gs')},\,\, \text{ for all}\,\, f\in \,_{(y,t')}\I^{g^{-1}}_{(x,s')}.$$
 Then $\alpha=(\mathcal{I}^g, \alpha^g)_{g\in {\rm mor}\mathcal{(G)}}$ is a global action of $\G$ on $\B\#\G.$ Moreover, if $(x,s)\in (\B_0\times \G)_{g^{-1}}\cap (\B_0\times \G)_{h^{-1}}$ then
$g(x,s)=h(x,s)$ if and only if 
$g=h$ and we conclude that $\B\#\G$ a free $\G$-category. 

Now we check  that $\B\#\G$ is a Galois covering of $\B,$ that is $(\B\#\G)/\G=\B\otimes \G.$ It follows by Example \ref{exe3} that for any $(x,s)\in \B_0\times {\rm mor}(\G)$ its orbit is 
\begin{equation}\label{orbit}\G(x,s)=\{x\}\times \G(d(s),-)=\{x\}\times \G \cdot  s
\end{equation} where the orbit $\G\cdot  s$ is induced by the partial action given in Example \ref{exe3}.  Since the intersection of two different orbits is empty, the map $$(\B\#\G/\G)_0\ni \G(x,s)\mapsto (x,d(s))\in  \B_0\times \G_0$$ is a bijection. Now following the notation given in equation \eqref{cmor} and using \eqref{orbit}, for two orbits $\G(x,a), \G(y, b),$ where $a,b\in \G_0$ and $e\in \G_0,$ we write 
$$C(\G\cdot (x,a), \G\cdot (y, b))_e=\bigoplus\limits_{d(v)=b, d(u)=a}{_{(y,v)}\I^e_{(x,u)}}=\bigoplus\limits_{r(v)=r(u)= e, \atop d(v)=b, d(u)=a}{{_y\B_x}^{v^{-1}u}}.$$ Then  
\begin{align*}\bigcup_{e\in \G_0}C(\G\cdot (x,s), \G\cdot (y, t))_e&=\bigoplus\limits_{r(v)=r(u), \atop d(v)=b, d(u)=a}{{_y\B_x}^{v^{-1}u}}=\bigoplus\limits_{r(v)=r(u), \atop d(v)=b, d(u)=a}{_{(y,v)}(\B \# \G)_{(x,u)}}.
\end{align*}
Now by \eqref{smash} and \eqref{Ig} we get 
\[_{(y, e)}\I^e_{(x,s)}=
  \begin{cases}
  {_y\B_x}^{s} & \text{if $r(s)=e$ and},\\
  0, & \text{otherwise.}
  \end{cases}\,\,=\,\,\,_{(y,e)}(\B \# \G)_{(x,s)},\] 
for all $e\in \G_0.$
Thus  if $d(v)=r(u)$  we have that,
\begin{equation}\label{act}\af_v[\,_{(y,d(v))}(\B \# \G)_{(x,u)}]=\,_{(y,v)}(\B \# \G)_{(x,vu)}.\end{equation}Then 
\begin{align*}_{\G\cdot (y, b)}(\B\#\G/\G)_{\G\cdot (x,a)}&=\left[\bigoplus\limits_{r(v)=r(u), \atop d(v)=b, d(u)=a}{_{(y,v)}(\B \# \G)_{(x,u)}}\right]/\G
\\&
\stackrel{l=v^{-1}u}=\left[\bigoplus\limits_{ d(v)=b, d(l)=a \atop d(v)=r(l)}{_{(y,v)}(\B \# \G)_{(x,vl)}}\right]/\G\\&
\stackrel{\eqref{act}}\equiv\bigoplus\limits_{d(v)=b, d(l)=a \atop r(l)=d(v)}{_{(y,d(v))}(\B \# \G)_{(x,l)}}\\&
=\bigoplus\limits_{ l\in \G(a,b)}{_{(y,r(l))}(\B \# \G)_{(x,l)}}
\\&=\bigoplus\limits_{ l\in \G(a,b)}{_y\B _x}^l,
\\&=\,_{(y,b)}(\B\otimes \G)_{(x,a)}
\end{align*}
and we get that $\B\#\G$ is  a Galois covering of $\B\otimes \G.$
\end{proof}

The following results gives a relation between the partial skew groupoid category and the  smash product category.
\begin{prop} Let $\alpha$ be the global action of $\G$ on $\B\#\G$ defined in Theorem \ref{Galois}. Suppose that $\B$ is a $\G$-graded $R$-category and that $\G_0$ is finite. Then $(\B\#\G)\ast_{\alpha}\mathcal{G}$ is equivalent to $\B\otimes \G.$ In particular, $\B\#\G$ is a Galois covering of $(\B\#\G)\ast_{\alpha}\mathcal{G}.$
\end{prop}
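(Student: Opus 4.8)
The plan is to construct an explicit $R$-linear functor $F\colon (\B\#\G)\ast_{\alpha}\G\to\B\otimes\G$ and prove it is an equivalence of $R$-categories; the final sentence will then follow from Theorem \ref{Galois}. I would first record that $(\B\#\G)\ast_{\alpha}\G$ really is a category: the action $\alpha$ produced in Theorem \ref{Galois} is global, hence associative, $\G_0$ is finite, and for each object $(x,s)$ the unique identity in $\G_0\cap\G^{(x,s)}$ is $r(s)$, for which ${_{(x,s)}\I^{r(s)}_{(x,s)}}={_x\B_x}^{d(s)}$ carries the local identity $1^{d(s)}$ arising from the homogeneous decomposition of $1_{{_x\B_x}}$ exactly as in the proof of Lemma \ref{cat}. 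Hence Lemma \ref{skewcat} applies and the skew product has identities.

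The heart of the argument is a hom-module computation. Combining the definition of the skew groupoid semicategory with \eqref{Ig} and $\alpha^0_g(x,s)=(x,gs)$, one obtains for objects $(x,s),(y,t)$ that
\[
{_{(y,t)}\big((\B\#\G)\ast_{\alpha}\G\big)_{(x,s)}}=\bigoplus_{g\in\G^{(x,s)}}{_{(y,t)}\I^g_{(x,gs)}}=\bigoplus_{\substack{d(g)=r(s)\\ r(g)=r(t)}}{_y\B_x}^{t^{-1}gs},
\]
where $d(g)=r(s)$ expresses that $g\in\G^{(x,s)}$ and $r(g)=r(t)$ is the nonvanishing clause of \eqref{Ig}. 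The assignment $g\mapsto l:=t^{-1}gs$ is a bijection from $\{g:r(s)\to r(t)\}$ onto $\{l:d(s)\to d(t)\}$, with inverse $l\mapsto tls^{-1}$, so the module above is identified with $\bigoplus_{l\in{_{d(t)}\G_{d(s)}}}{_y\B_x}^{l}={_{(y,d(t))}(\B\otimes\G)_{(x,d(s))}}$. I therefore define $F$ on objects by $(x,s)\mapsto(x,d(s))$ and on morphisms by this identification, so that $F$ is the identity on the underlying $\B$-morphisms.

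It remains to verify that $F$ is a functor and that it is fully faithful and essentially surjective. Full faithfulness is immediate from the identification of hom-modules, and essential surjectivity is in fact surjectivity on objects, since every object $(x,e)$ of $\B\otimes\G$ (with $e\in\G_0$) equals $F(x,e)$ because $d(e)=e$. The step I expect to demand the most care is functoriality. Here I would exploit the rigidity of the action built in Theorem \ref{Galois}: each $\alpha^g$ leaves the underlying $\B$-morphism unchanged and only relabels the object tags $(x,s)\mapsto(x,gs)$. Consequently, for $f$ attached to $h_2$ and $l$ attached to $h_1$, the skew product $fl=\alpha^{h_2}(\alpha^{h_2^{-1}}(f)\,l)$ collapses on $\B$-values to the ordinary graded composition in $\B$, yielding a morphism attached to $h_2h_1$ whose $\B$-value lies in ${_z\B_x}^{u^{-1}h_2h_1s}$. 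A short degree computation using $tt^{-1}=r(t)$ and $r(t)=r(h_1)$ then matches this with the graded composite of the $\B$-values of $F(f)$ and $F(l)$ in $\B\otimes\G$, giving $F(fl)=F(f)F(l)$; preservation of identities follows by comparing the local identity $1^{d(s)}$ with the identity morphism of $(x,d(s))$ supplied by Lemma \ref{cat}. Thus $F$ is an equivalence and $(\B\#\G)\ast_{\alpha}\G\simeq\B\otimes\G$.

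Finally, the ``in particular'' assertion is then immediate: Theorem \ref{Galois} states that $\B\#\G$ is a Galois covering of $\B\otimes\G$, that is $(\B\#\G)/\G=\B\otimes\G$, and transporting this along the equivalence just established shows that $\B\#\G$ is a Galois covering of $(\B\#\G)\ast_{\alpha}\G$.
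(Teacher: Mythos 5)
Your proof is correct and follows essentially the same route as the paper: the same hom-module computation identifying ${_{(y,t)}\big((\B\#\G)\ast_{\alpha}\G\big)_{(x,s)}}$ with ${_{(y,d(t))}(\B\otimes\G)_{(x,d(s))}}$ via $g\mapsto t^{-1}gs$, and the same functor $F(x,s)=(x,d(s))$ acting as the identity on morphisms, with Lemmas \ref{cat} and \ref{skewcat} invoked in the same way. Your explicit verification of functoriality and of the local-identity hypothesis of Lemma \ref{skewcat} fills in details the paper leaves implicit, but it is not a different argument.
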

\begin{proof} Since $\B$ is an $R$-category and  $\G_0$ is finite, then s  $\B\#\G$  and $\B\otimes \G$ are $R$-categories, thanks to Lemma \ref{cat}.  Moreover  $\alpha$ is  global, then by  Lemma \ref{skewcat}  we have that $(\B\#\G)\ast_{\alpha}\mathcal{G}$ is an $R$-category. Now, let $(x,t),(y,s)$ two objects of $\B\#\G,$ by definition we have
\begin{align*}_{(y,t)}[(\B\#\G)*_{\alpha}\G]_{(x,s)}&=\bigoplus\limits_{(g,s)\in \G^2}{_{(y,t)}{\I^g}_{(x,gs)}}
\\&\equiv \bigoplus\limits_{(g,s)\in \G^2, \atop r(t)=r(g)}{_y\B_x}^{t^{-1}gs}
\\&=\bigoplus\limits_{g\in _{r(t)}\G_{r(s)}}{_y\B_x}^{t^{-1}gs}\\&=\bigoplus\limits_{u\in_{d(t)}\G_{d(s)} }{_y\B_x}^{u}
\\&=\,_{(y,d(t))}(\B\otimes \G)_{(x,d(s))}.\end{align*} 
Thus there is a functor $F\colon (\B\#\G)\ast_{\alpha}\mathcal{G}\to \B\otimes \G$ such that $F(x,s)=(x,d(s))$ and $F$ is the identity on the morphism, then $F$ is clearly full and faithful,  and 
essentially surjective, then $F$ is an equivalence.
\end{proof}
We finish this work with an example to ilustrate our results.
\begin{exe}
We consider the  $R$-category $\B$ with
\begin{enumerate}
\item $\B_0=\{x, y\}.$
\item Given $ u,v \in \B_0$ let ${_u\B_v}=Re_1\oplus Re_2\oplus Re_3\oplus Re_4,$  where $e_1,e_2,e_3, e_4$ are pairwise orthogonal central idempotents with sum 1.
\item For all $u,v,w \in \B_0$ we have an $R$-bilinear map $\cdot: {_u\B_v}\times {_v\B_w}\to {_u\B_w};$ given by multiplication.
\end{enumerate}
Take the groupoid $\G=\{d(g), r(g), g, g^{-1}\}\,\, \text{with}\,\, \G_0=\{d(g), r(g)\}$.   Then  $\B$ is a $\G$-graded semicategory, where 
$${_u\mathcal{B}_v}^g=Re_1, \,\, {_u\mathcal{B}_v}^{g^{-1}}=Re_2,\,\, {_u\mathcal{B}_v}^{d(g)}=Re_3,\,\,{_u\mathcal{B}_v}^{r(g)}=Re_4,\,\,$$ for all $u,v\in \B_0,$
and the smash product $\B\#\G$ is defined thus where
\begin{enumerate}
\item[(i)] The objects:
\small
$$(\B\#\G)_0=\{(x,d(g)), (x,r(g)), (x,g), (x,g^{-1}), (y,d(g)), (y,r(g)), (y,g), (y,g^{-1})\}.$$
\normalsize
\item[(ii)] For all $u,v \in \B_0$ one has the $R$-module morphisms:

\begin{itemize}
\item $_{(u,d(g))}(\B \# \G)_{(v,d(g))}=\,_{(u,g)}(\B \# \G)_{(v,g)}= Re_3 $
\item $ _{(u,r(g))}(\B \# \G)_{(v,r(g))}=_{(u,g^{-1})}(\B \# \G)_{(v,g^{-1})}=Re_4.$
\item In the other possible cases
$_{(u,e)}(\B \# \G)_{(v,e')}=\{0\},\quad \text{where}\quad e,e' \in mor(\G).$
\end{itemize}

\end{enumerate}
Moreover, by Theorem \ref{Galois} there is a global action on $\B\#\G$  making it   a Galois covering  of  is $\B\otimes\G,$ where 
\begin{enumerate}
\item[(i)] The objects:
$$\B_0\otimes \G_0=\{(x,d(g)), (x,r(g)), (y,d(g)), (y,r(g))\}.$$
\item[(ii)] For all $u,v \in \B_0$ the $R$-module morphisms:
\begin{itemize}
\item 
$_{(u,d(g))}(\B \otimes \G)_{(v,d(g))}=_{(u,d(g))}(\B \otimes \G)_{(v,r(g))}=Re_3  $
 \item $ _{(u,r(g))}(\B \# \G)_{(v,r(g))}= _{(u,r(g))}(\B \# \G)_{(v,d(g))}=Re_4.$

\end{itemize}
\end{enumerate}
%
%

\end{exe}

\end{document}